\documentclass[letterpaper,11 pt]{article}
\usepackage[margin=1in]{geometry}

\usepackage[authoryear]{natbib}

\usepackage{graphicx}

\usepackage{amsfonts}
\usepackage{bm} 
\usepackage{amsthm}
\usepackage{enumitem}

\usepackage{float} 
\usepackage{booktabs} 
\usepackage{multirow} 
\usepackage{algorithmic}
\usepackage{amsmath,amssymb}
\usepackage{bm}
\usepackage{tikz}
\usetikzlibrary{arrows.meta,positioning}
\usepackage{booktabs}
\usepackage[ruled,vlined]{algorithm2e}
\usepackage{empheq}  
\usepackage[most]{tcolorbox} %

\setcitestyle{maxnames=1}
\usepackage{color,soul}
\usepackage{nicefrac}

\usepackage{enumitem}

\newtcolorbox{ProblemBox}[1][]{%
  enhanced,
  breakable,
  colback=white,
  colframe=black,
  boxrule=0.6pt,
  left=1pt,right=1pt,top=-6pt,bottom=-4pt,
  sharp corners,
  #1
}

\newtcbox{\InnerMathBox}{%
  nobeforeafter,
  math upper,
  colback=white,
  colframe=black,
  boxrule=0.6pt,
  left=-2pt,right=0.1pt,top=2pt,bottom=-2pt,
  sharp corners
}
\usepackage{mdframed}
\usepackage{xcolor}
\usepackage{subcaption}

\usepackage{amsthm}
\newtheorem{theorem}{Theorem}
\newtheorem{assumption}{Assumption}
\usepackage[section]{placeins}

\newtheorem{lemma}{Lemma}

\usepackage[
    colorlinks=true,
    linkcolor=blue,   
    citecolor=black,  
    urlcolor=blue
]{hyperref}

\usepackage{amsmath,amssymb}
\usepackage{tikz}
\usetikzlibrary{
  positioning,
  calc,
  arrows.meta,
  shapes.misc,
  shapes.geometric,
  shadows.blur,
  decorations.markings,
  decorations.pathmorphing
}

\title{\textbf{Uncertainty-aware Power System Planning via Gradient Descent}}

\author{
Mehrnoush Ghazanfariharandi%
\thanks{Industrial and Systems Engineering, Rutgers University, NJ, USA.}
\and
Robert Mieth\footnotemark[1]
\textsuperscript{,}\thanks{Corresponding author. \newline 
Emails: \{mehrnoush.ghazanfariharandi, robert.mieth\}@rutgers.edu}
}

\date{}

\begin{document}

\maketitle

\begin{abstract}
Power system planning models provide important guidance on long-term investment strategies with significant socio-economic impact. To remain computationally manageable, however, such planning models compromise on the level of complexity with which power system operations and physics are captured. 
A common approach in most planning models is to collapse multi-stage power system operational processes into a single stage and, as a result, give up on the ability to account for uncertainty in each operational stage. 
In light of newly emerging load patterns and the continuing adoption of weather-dependent stochastic renewable generation, this uncertainty, however, becomes increasingly impactful on operations, and ignoring it has been shown to cause underinvestment in transmission capacity and flexible resources. In this work, we present a computational approach for power system expansion planning that explicitly considers two-stage day-ahead (DA) and real-time (RT) operational decisions under uncertainty while retaining time-coupling constraints to allow modeling generator ramping and energy storage. To solve the resulting optimization problem efficiently, we employ a projected stochastic gradient descent algorithm combined with a primal-dual optimization framework and an exponential moving average smoothing strategy to improve convergence stability. We evaluate the resulting investment decisions within a two-stage DA and RT simulation framework and compare them with a classic expansion planning model that assumes perfect knowledge of renewable generation. Our experiments show that the proposed framework achieves lower total system costs while ensuring that the implemented technology portfolio achieves set renewable integration targets.
\end{abstract}

\maketitle

\section{Introduction}

Demand for electric energy is rising world-wide.
In the United States, electric load is expected to grow by more than 50\% as transportation and industry electrify and as large-scale data centers expand \citep{NationalRenewableEnergyLaboratory2024annualtechnologybaseline,Shehabi2024DataCenterEnergy}. 
Meeting this growth will require major additions to generation, transmission, and energy storage infrastructure.
Such large-scale and long-term investments require dependable guidance from quantitative planning models on which investment pathways should be pursued.

Planning electric power infrastructure is complicated by power flow physics and high reliability requirements.
As a result, a central challenge for power system planning models is balancing modeling fidelity and computational tractability. 
In the past, power system planning models could rely on low-complexity proxies of power system operational practices and constraints due to the fact that traditional load patterns are well known and could be reasonably well predicted.
The validity of this assumption, however, is eroding due to the increasing adoption of new load types (\cite{force2025characteristics}) and the adoption of weather-dependent renewable generation, i.e., wind and solar PV.
The latter is accelerated by the competitive economics of renewable generation technologies (\cite{eia_lcoe_aeo2026}), and the presence of policies that prioritize their deployment to meet emission reduction goals in many jurisdictions.
Because power production from wind and solar power exhibits
high variability, limited predictability, and non-dispatchability, adequate expansion planning models must, on the one hand, capture the temporal dynamics of renewable production and, on the other hand, account for the fact that day-to-day system operations must deal with forecast errors arising from limited renewable predictability \citep{ela2014evolution,pineda2016impact}.
To date, there exists no power system planning model that adequately achieves both of these objectives. 

\subsection{Background and related literature}
The integration of renewable generation into long-term power system planning has been studied for several decades \citep{farghal1988generation,caramanis2010introduction} and there have been multiple studies on how large amounts of wind and solar power affect the optimal generation mix and how operational constraints such as unit commitment influence investment decisions \citep{de2011determining,pineda2016impact,jin2014temporal}. 
In fact, most existing models focus on capturing renewable variability \citep{arabali2014multi,munoz2013approximations,orfanos2012transmission} but neglect the role of forecast errors. As a result, they cannot capture how uncertainty during real-time operations changes the value of flexibility, for example, provided by battery storage, and the resulting incentives to invest in technologies that offer such flexibility. 
As a result, planning models may underestimate the value of flexibility and overestimate the efficiency of renewable energy utilization \citep{pineda2016impact}.

A line of research by \cite{pineda2016impact} and \cite{bylling2020impact} on planning models that do consider forecast errors, formulate system operations as a multi-stage clearing process, where a first day-ahead stage dispatches available generators based on point forecasts of load and renewable generation, and a second real-time stage balances forecast errors.
In particular, \cite{pineda2016impact} shows that optimal investments depend strongly on whether these stages are co-optimized in the planning model (assuming perfect knowledge of the system operator) or cleared sequentially to preserve the non-anticipatory nature of forecasts in real operations. 
Related work also emphasizes the importance of market design and uncertainty treatment for efficient investment and dispatch \citep{wang2008security,bessa2019handling,pritchard2010single}. 
Most recently, the work by \cite{mantegna2026uncertainty} has highlighted the limitations of deterministic capacity expansion models in addressing uncertainty and introduces a scalable two-stage adaptive robust optimization framework that explicitly incorporates forecast uncertainty into planning decisions. 
Their results underline the value of investing in flexible resources to make operations more robust and efficient.

With the rising need to counteract renewable variability and match renewable availability with load demand, battery storage will play a significant role in the technology portfolio of future power systems.
In this context, \cite{baker2016energy} explicitly highlights that net-load and renewable forecast uncertainty drive the optimal siting and sizing of storage. 
Indeed, state-of-the-art planning models co-optimize investments in and the chronological operation of battery storage and show that batteries can materially change optimal build decisions in systems with high renewable penetration \citep{levin2023energy,merrick2024representation,zhou2022joint,allahvirdizadeh2023stochastic,aguado2017battery}. 
However, integrating energy storage investments into planning models
introduces time coupling constraints through the state of charge, which complicates the necessary operations model and renders proposals to deal with forecast errors, such as \cite{pineda2016impact,bylling2020impact}, computationally intractable.

The computational challenge of representing power system operations that must deal with forecast errors and also keeping time-coupling operations constraints intact arises from the fact that separating operation decisions with non-anticipatory constraints from investment decisions creates a hard-to-solve bilevel problem structure \citep{pineda2016impact,degleris5169721gradient,mieth2026integrated}.
Traditionally, these problems are reformulated into single-level equivalent programs using optimality conditions that result in complex models with equilibrium constraints (MPECs).
Such approaches quickly become computationally intractable, especially when the operational layer includes richer dynamics or when large scenario sets are considered \citep{kleinert2023there,ye2019incorporating,ghazanfariharandi2025value,dempe2014kkt}. 
Recent methodological advances propose an alternative through gradient-based and first-order techniques that avoid explicit MPEC reformulations and scale better to large problems \citep{degleris5169721gradient,amos2017optnet,franceschi2018bilevel,ji2021bilevel}. In a similar spirit, \cite{kotary2025method} extends these ideas to bilevel problems with coupling constraints by combining differentiable lower-level solvers, projection operators, and gradient-based correction steps to ensure feasibility without relying on KKT-based single-level reformulations. 
These approaches, however, have not yet been applied to the problem that motivates this paper.

The recent work presented in \cite{mantegna2026uncertainty} is the most closely related to our paper in terms of motivation and problem setup. The authors also focus on capturing operational uncertainty and use an adaptive robust uncertainty set with a pre-defined budget to model imperfect knowledge during operations.
While similarly motivated, the resulting model in \cite{mantegna2026uncertainty} has two central distinctions from what we present in this paper. First, our approach does not require a pre-defined uncertainty set and budget, but is instead directly data-driven, using historical forecast and forecast-error data. Second, the approach in \cite{mantegna2026uncertainty} slightly alters current operational practice, since first-stage operations are aware of the uncertainty encoded in the uncertainty set and adjust their decisions accordingly. In current industry practice, only point forecasts are used, with little to no anticipation of real-time forecast errors.

\subsection{Contributions}
Motivated by the need for more accurate power system planning models in the presence of evolving load and renewable injection patterns, this paper proposes a power system planning problem alongside an effective solution method that (i) realistically models system operations as a two-step procedure with imperfect forecasts in the first stage and (ii) preserves time-coupling constraints in the operations model to capture energy storage and ramping constraints.
We solve this problem in a computationally tractable manner by avoiding an MPEC formulation of the operational problem and its optimality conditions. Instead, we express it as a sequence of optimization problems whose parameters are updated using projected stochastic gradient descent, implicit differentiation, and a primal–dual update framework.
We summarize or contributions as follows:
\begin{enumerate}[label=$\mathcal{C}$\arabic*]
    \item We formulate a power system planning problem that explicitly considers system operations as a cost-minimizing two-stage optimal power flow problem with imperfect knowledge on some parameters (e.g., wind power injections) in the first stage. 
    The operations problem is solved for a set of 24-hour days and modeled with relevant time coupling constraints to consider ramping and energy storage constraints.
    Relative to \cite{pineda2016impact}, this enables us to internalize the effects of forecast errors on optimal planning decisions that include battery storage investments.

    \item\label{cont:constraint} We use a projected stochastic gradient descent approach that leverages implicit differentiation of the optimization problems that model system operations. 
    Relative to \cite{degleris5169721gradient}, we model operations as a multi-stage process and we enforce a constraint (a renewable generation utilization target) in the planning problem that depends on the decisions of the operations problems. 
    We derive performance and convergence guarantees.

    \item We demonstrate the method on a numerical experiment and benchmark the resulting investment decisions against a standard investment model that assumes a perfect-knowledge system operator. 
    
\end{enumerate}

The presence of the renewable generation target (\ref{cont:constraint}) is noteworthy. The gradient descent method proposed in \cite{degleris5169721gradient} assumes that operational decisions only incur costs for the planner but do not capture the option of the planner wishing to achieve a target “behavior” of the system operator.
Enforcing such constraints is tricky within the proposed gradient descent approach, as the batched solution process prevents the straightforward conversion of the constraint into a soft constraint with a penalty in the objective. See also the discussion in \cite{wang2023learning}, who use a two-step Lagrangian relaxation approach to solve this problem.
In this paper, we propose a primal--dual framework to enforce the renewable target constraint dynamically during the solution process. 
The proposed solution approach supports large scenario sets and allows rapid updates when assumptions or data change.

\section{Problem formulation}
\label{sec:problemfor}
We take the perspective of a power system planner that wants to determine the amount and location of new generation, storage, and transmission capacity that minimizes the costs of investment and system operations while meeting pre-set goals on the generation portfolio. 
In the following, we present the resulting formulation in consecutive levels of detail to introduce the relevant modeling concepts.
Section~\ref{ssec:investment_planning_with_decoupled_operations_decisions} first formulates a general planning problem where system operations independently respond to planning decisions. 
Section~\ref{ssec:multi_stage_operations} then introduces the separation of system operations in multiple stages with imperfect knowledge on which we build a more detailed planning model in Section~\ref{ssec:planning_with_multi_stage_operations_and_operations_constraints}, that also includes renewable integration targets. 
This formulation will be the basis for discussing our solution approach and formal results.
Finally, Section~\ref{ss:planningandoperation} presents all details of the investment and operations model.

\subsection{Planning with decoupled operations decisions}
\label{ssec:investment_planning_with_decoupled_operations_decisions}

The planner seeks to optimize a planning objective $J(\bm{\theta})$ by choosing investment decisions 
$\bm{\theta}$ within a feasible set of investments $\mathcal{H}$.
In this general form, objective $J$ may minimize the total cost of constructing new physical assets, but may also capture other planning goals such as ensuring system reliability, minimizing $\mathrm{CO}_2$ emissions, or maximizing renewable energy integration. 
Notably, system operations depend on investment decisions $\bm{\theta}$, but the planner typically does not operate the system and/or cannot alter operational processes. 
Therefore, the planner must consider that system operations focus on their own objective, which may not necessarily be perfectly aligned with the planning objective. 
For example, the planner may wish to achieve a target level of low-emission generation, but the system operator may only focus on achieving cost-minimal day-to-day operations with the available resources $\bm{\theta}$.
To capture this complication, we write the planning problem subject to the decisions of an ``inner'' operations problem as:
\begin{subequations}
\label{planningproblem}
\begin{align}
\min_{\bm{\theta}} \quad
J(\bm{\theta})
&:= \bm C^{\mathrm{Inv}}(\bm{\theta})
+ \sum_{d\in[D]} \pi_d  h_d\!\left(\bm x^*_{d}(\bm{\theta})\right) \label{eq:obj_firstproblem}\\
\text{s.t.}\quad
&\bm{\theta} \in \mathcal{H}, \label{invconss}\\
&\bm x^*_{d}(\bm{\theta})
=
\arg\min_{\bm{x}}
\left\{
\bm C^{\mathrm{Opr}}\!\left(\bm x,\bm{\theta},\bm{\xi}_d\right)\quad
\ \text{s.t.}\ \quad
\bm x \in \mathcal{X}(\bm{\theta},\bm{\xi}_d)
\right\},
\quad \forall d\in[D]. \label{operproblem}
\end{align}
\end{subequations}
The problem is formulated over a set of $D$ scenarios, e.g., 24-hour days, indexed by $d$. 
The formulation in \eqref{planningproblem} accommodates the common practice of using ``representative days'' with a potentially non-uniform weight $\pi_d$. For the remainder of this paper we assume scenarios are drawn from a set of historical days with equal weight.
The inner problem is a cost-minimization problem that reflects generator dispatching or electricity market clearing procedures.
Function $h$ maps the operational decisions $\bm{x}$ into the objective of the planner.
These decisions $\bm{x}$ include, for example, generator schedules, storage charging/discharging actions, and transmission line flows.
The feasible operational set $\mathcal{X}(\bm{\theta},\bm{\xi}_d)$
depends on investments $\bm{\theta}$ and scenarios $\bm{\xi}_d$ of external parameters, e.g., load demand or renewable power production, that are available for all $d\in[D]$ scenario days.
Thus, set $\mathcal{X}$ collects the system’s physical and operational constraints, including capacity limits on generators, storage, and transmission, ramping limits and other intertemporal constraints, network power-flow constraints and transmission limits, and nodal power balance.
We discuss details of these constraints in Section~\ref{ss:planningandoperation} below.
The operations cost function $\bm C^{\mathrm{Opr}}$ captures short-term production and balancing costs under these
constraints.
For the problem in \eqref{planningproblem} to be well-posed, some properties of the inner problem \eqref{operproblem} such as feasibility and uniqueness of the solution are implied. 
We discuss these assumptions in detail in Section~\ref{sec:Convergence Analysis} below.

The problem formulation in \eqref{planningproblem} explicitly decouples the operations decisions from the planning objective, leading to the bilevel problem structure of \eqref{planningproblem}.
We note that if the objective of the planner and the operations do happen to align, for example if both only care about cost minimization, \textit{and} there is no uncertainty in the operations stage, the bilevel structure would not be necessary and the problem can be formulated as a single-level, albeit potentially large-scale, optimization problem. 
The presence of imperfect information, however, further motivates formulating the problem in a bilevel structure as we discuss in the following.

\subsection{Multi-stage operations with imperfect knowledge}
\label{ssec:multi_stage_operations}

In practice, power system operations rely on a multi-stage decision-making sequence. For example, a day-ahead (DA) and a real-time (RT) market clearing stage.
We will adopt this DA+RT setup for the remainder of the paper as it is the most common power system organization in the U.S. and Europe.
The DA stage schedules resources based on forecasts of uncertain quantities such as weather-dependent renewable generation and demand, while the RT stage corrects for forecast errors once uncertainty is realized. 
Therefore, the resulting discrepancy between DA schedules and RT realizations directly impacts operating costs, reliability, congestion patterns, and the value of flexibility.

To capture this, we expand the operational model \eqref{operproblem} into two stages. 
Given a point forecast $\bm{\bar{\xi}}$ of the uncertain parameters $\bm{\xi}$, the DA stage selects a schedule $\bm x^{DA}$:
\begin{equation}
\label{da}
\text{DA stage:}\quad
\min_{\bm x^{DA}}\left\{
\bm C^{\mathrm{Opr,DA}}(\bm x^{DA})
\quad\ \text{s.t.}\ \quad
\bm x^{DA} \in \mathcal{X}^{\rm DA}(\bm\theta,\bm{\bar\xi})
\right\}.
\end{equation}
After uncertainty realizes as $\tilde{\bm\xi}$, the RT stage selects recourse actions
$\bm x^{RT}$ (e.g., redispatch):
\begin{equation}
\label{rt}
\text{RT stage:}\quad
\min_{\bm x^{RT}}\left\{
\bm C^{\mathrm{Opr,RT}}(\bm{x}^{DA,*},\bm x^{RT})
\ \quad \text{s.t.}\ \quad
\bm x^{RT} \in \mathcal{X}^{\rm RT}(\bm\theta,\bm{x}^{DA,*},\tilde{\bm\xi})
\right\},
\end{equation}
where $\mathcal{X}^{\rm RT}(\cdot)$ enforces real-time feasibility constraints and limits corrective actions based on previous decisions $\bm{x}^{DA,*}$.
Given this two-stage model, each scenario $\bm{\xi}_d$ therefore contains two components:
(i) a forecast of uncertain parameter, denoted by $\bar{\bm{\xi}}_d$, which is used in the DA market clearing, and
(ii) the realized uncertain parameter, denoted by $\tilde{\bm{\xi}}_d$, which materializes in real time and determines the RT balancing actions. We write
$
\bm{\xi}_d = \left(\bar{\bm{\xi}}_d,\tilde{\bm{\xi}}_d\right)
$.

The presence of a DA and RT stage with imperfect foresight raises the question to what extend the degree of uncertainty and coordination between them affects power system expansion planning. 
Intuitively, with increasing uncertainty of resource availability in the RT stage during DA decision-making, e.g., due to increasing net-load uncertainty from higher shares or renewable generation capacity, the impact of DA decisions $\bm{x}^{DA,*}$ on the RT solution $\bm{x}^{RT,*}$ intensifies.
The objective of our work is to study this impact in the context of a planning model.
We note that traditional planning models essentially assume that all information is available at the DA stage, i.e., that there is no uncertainty. Under this assumption, the operations problem can be modeled as a single stage. 
If the planner then also assumes that planning and operations objectives align, as discussed above, the model collapses to a single-level optimization problem, which we will also use as a reference as discussed in Section~\ref{ssec:testing_system} below.

\subsection{Planning with multi-stage operations and operations constraints}
\label{ssec:planning_with_multi_stage_operations_and_operations_constraints}

We now formulate a more detailed version of \eqref{planningproblem} that will be the basis for deriving our proposed solution method.
To this end, we also introduce a final important consideration.
So far, similar to the model proposed in \cite{degleris5169721gradient}, our formulation in \eqref{planningproblem} cannot impose targets on the decisions made in the operations problem. 
However, practical planning usually has performance targets such as achieving a minimum level of renewable generation. 
The following model enforces such a constraint and introduces the specific model structure that we will use to derive a tractable solution approach and discuss its performance:

\begin{subequations}\label{tri-level}
\begin{align}
&\min
&& J(\bm\theta)
:= \sum_{y \in [Y]} \Big(\frac{1}{1+r}\Big)^{\tau(y)} \Bigg(
C_y^{\mathrm{inv}}(\bm\theta_y)
+ \sum_{d=1}^{D}\Big( \bm C_{d,y}^{\text{DA}}\big(
\bm x_{d,y}^{\rm{DA},*}(\bm\theta_y)\big) +
\bm C_{d,y}^{\text{RT}}\big(
\bm x_{d,y}^{\rm{RT},*}(\bm\theta_y,\bm x_{d,y}^{\rm{DA},*}(\bm\theta_y))\big)
\Big) \Bigg)
\label{obj_tri-level}\\[0.1em]
&\text{s.t.} && \underline{\bm\theta}_y \le \bm\theta_y \le \overline{\bm\theta}_y,
\qquad \forall y \in [Y],
\label{limit_inv}\\
&&& \bm\theta_{y-1} \le \bm\theta_y,
\qquad \forall y \in [Y]\setminus\{1\},
\label{monocity_inv}\\
&&&
\eta_y\,\left(\sum_{d \in [D]} \Big(G^{\mathrm{conv}}\!\left(
\bm x_{d,y}^{\rm{DA},*} , \bm x_{d,y}^{\rm{RT},*}
\right) + G^{\mathrm{ren}}\!\left(
\bm x_{d,y}^{\rm{RT},*}
\right) \Big)\right) 
\le 
\sum_{d \in [D]} \Big(G^{\mathrm{ren}}\!\left(
\bm x_{d,y}^{\rm{RT},*}
\right)\Big),
\qquad  \forall y \in [Y],
\label{outer_renewable_target}\\[0.2em]
&&&
\bm x_{d,y}^{\rm{DA},*}(\bm\theta_y)
\in
\left\{
\begin{aligned}
\arg\min_{\bm x_{d,y}^{\rm{DA}}}\;& C_{d,y}^{\rm{DA}}(\bm x_{d,y}^{\rm{DA}})\\
\text{s.t.}\quad
& A_{d,y}^{\rm{DA}}(\bm\theta_y)\,\bm x_{d,y}^{\rm{DA}}
\le \bm b_{d,y}^{\rm{DA}}(\bm\theta_y, \bm{\bar \xi}_{d,y}).
\end{aligned}
\right\}
\ \forall d \in [D],\forall y \in [Y]
\label{middlelevel}
\\[-0.2em]
&&&
\bm x_{d,y}^{\rm{RT},*}\bigl(\bm\theta_y,\bm x_{d,y}^{\rm{DA},*}(\bm\theta_y)\bigr)
\in
\left\{
\begin{aligned}
\arg\min_{\bm x_{d,y}^{\rm{RT}}}\;& C_{d,y}^{\rm{RT}}(\bm x_{d,y}^{\rm{RT}})\\
\text{s.t.}\quad
& A_{d,y}^{\rm{RT}}\bigl(\bm\theta_y,\bm x_{d,y}^{\rm{DA},*}(\bm\theta_y)\bigr)\,\bm x_{d,y}^{\rm{RT}}
\le \bm b_{d,y}^{\rm{RT}}\bigl(\bm\theta_y,\bm x_{d,y}^{\rm{DA},*}(\bm\theta_y), \tilde{\bm \xi}_{d,y}\bigr)
\end{aligned}
\right\} \label{innerlevel}\\
&&&
\hspace{10cm} \forall d \in [D],\forall y \in [Y]. \nonumber
\end{align}
\end{subequations}
For a given set of years indexed by $y\in[Y]$, objective \eqref{obj_tri-level} computes the net-present value of the investment cost and the operations cost arising at the DA stage \eqref{middlelevel} and RT stage \eqref{innerlevel} using discount factor $r$ and the number of years $\tau(y)$  between a reference year (e.g., ``today'') and the year with index $y$.
To define the feasible investment space, constraint \eqref{limit_inv} limits the capacity potential of each technology and constraint \eqref{monocity_inv} imposes intertemporal monotonicity so that capacity can only increase across years\footnote{ 
We note that this constraint does not prevent modeling pre-defined decommissioning of generators, which can be captured by an adjustment of available capacities in the operations models. 
}.
Together, constraints \eqref{limit_inv} and \eqref{monocity_inv} represent the investment space $\mathcal{H}$ from \eqref{invconss}. 
In addition, constraint \eqref{outer_renewable_target} imposes the annual system-wide renewable energy target. 
For each year $y$, the parameter $\eta_y \in [0,1]$ specifies the minimum fraction of total annual energy generation that renewable resources should supply. 
Here, $G^{\rm conv}$ and $G^{\rm ren}$ are shorthand for extracting the total conventional and renewable energy production from the operational decisions.
Notably, renewable targets are not enforced during daily operations, which usually just aim for a cost-minimal economic dispatch. 
We indicate this by writing constraint \eqref{outer_renewable_target} to explicitly depend on the optimal operational decisions 
$\bm x_{d,y}^{\rm{DA},*}\big(\bm \theta_y\big)$ and 
$\bm x_{d,y}^{\rm{RT},*}\!\big(\bm \theta_y, \bm x_{d,y}^{\rm{DA},*}(\bm\theta_y)\big)$ of the DA and RT problems in  \eqref{middlelevel} and \eqref{innerlevel}.

We model the DA and RT problems as linear programs. This is a common assumption as most technical constraints directly admit a linear representation and bulk power system power flow physics are well-represented by linear approximations (e.g., ``DC'' power flow). The detailed formulations are shown in Section~\ref{ss:planningandoperation} below. 
In the compact form here, the matrix \(A_{d,y}^{\rm{DA}}(\bm{\theta}_y)\) in \eqref{middlelevel} collects the coefficients of all linear DA constraints and the vector \(b_{d,y}^{\rm{DA}}(\bm{\theta}_y,\bar{\bm{\xi}}_{d,y})\) contains the corresponding right-hand-side values, such as electricity demand, renewable generation forecasts, and capacity bounds implied by a given investment decision \(\bm{\theta}_y\) and forecast $\bar{\bm{\xi}}_{d,y}$. 
Using these constraints, the DA objective function \(C_{d,y}^{\rm{DA}}(\bm{x}_{d,y}^{\rm{DA}})\) minimizes generation costs, storage operating costs, and penalty terms associated with load shedding and renewable curtailment.
Similarly,  matrix \(A_{d,y}^{\rm{RT}}(\bm{\theta}_y,\bm{x}_{d,y}^{\rm{DA},*})\) includes the coefficients of all RT constraints and the vector \(b_{d,y}^{\rm{RT}}(\bm{\theta}_y,\bm{x}_{d,y}^{\rm{DA},*},\tilde{\bm{\xi}}_{d,y})\) defines the corresponding RT right-hand-side values under the realized uncertainty outcomes $\tilde{\bm{\xi}}_{d,y}$ and the decision of the DA stage $\bm{x}_{d,y}^{\rm{DA},*}$.
Based on these constraints, the RT objective function \(C_{d,y}^{\rm{RT}}(\bm{x}_{d,y}^{\rm{RT}})\) minimizes balancing costs, including upward and downward re-dispatch costs and penalty terms for renewable curtailment and load shedding.

The proposed formulation in \eqref{tri-level} will be central to our discussion in Section~\ref{sec:solution_methodology} on how to solve this problem efficiently and to discuss formal results. 
The following section now presents modeling details.

\subsection{Detailed planning and operation problems formulation}
\label{ss:planningandoperation}

We first present the detailed planning constraints and objective followed by details on the objectives and constraints used for the DA and RT operations models. 
Without loss of generality, we focus our discussion on the modeling of \textit{wind power} as a stand-in for general renewable injections and/or net-load uncertainty.

\subsubsection{Planning problem}
 
Vector $\bm{\theta}_y$ collects the investment variables for each year
$\bm \theta_y := \{\bar{\bm p}_y,\, \bar{\bm f}_y,\, \bar{\bm p}^{B}_y,\, \bar{\bm E}_y,\, \bar{\bm p}^{w}_y\}$, i.e., installed capacities of conventional generation, transmission lines, battery power, battery energy, and wind generation over the planning horizon.
These variables are additions to existing assets that enter the model as fixed parameters.
Hence, we define the planning objective $C_y^{\mathrm{inv}}$ as 
\begin{subequations}\label{invmodel}
\begin{equation}
    C_y^{\mathrm{inv}}(\bm{\theta}_y) = \sum_{l \in [L]} c^{\rm L}_{l,y}\,\bar f_{l,y}
+ \sum_{g \in [G]} c^{\rm G}_{g,y}\,\bar p_{g,y}
+ \sum_{b \in [B]} c^{\rm E}_{b,y}\,\bar E_{b,y}
+ \sum_{w \in [W]} c^{\rm W}_{w,y}\,\bar p^{w}_{w,y},
\label{objplanning}
\end{equation}
with investment cost coefficients $c^{\rm L}_{l,y}, c^{\rm G}_{g,y}$, $c^{\rm E}_{b,y}$, $c^{\rm W}_{w,y}$.
We note here that this strictly linear approach to modeling investment costs is a simplifying assumption as practical investments in generation and transmission capacity are usually ``lumpy'' and invoke fixed costs that are independent of the installed capacity. 
We consider tackling this problem beyond the scope of this particular work.
See also the discussion on Assumption~\ref{assumption1} in Section~\ref{sec:Convergence Analysis} below.

The detailed investment constraints (constraints \eqref{limit_inv} and \eqref{monocity_inv} above) are:
\begin{align}
& 0 \le \bar f_{l,y} \le \bar f^{\max}_{l},
&& \forall l \in [L],\ \forall y \in [Y]
\label{Line capacity limits}
\\
& 0 \le \bar p_{g,y} \le \bar p^{\max}_{g},
&& \forall g \in [G],\ \forall y \in [Y]
\label{Generation output limits.}
\\
& 0 \le \bar E_{b,y} \le \bar E^{\max}_{b},
&& \forall b \in [B],\ \forall y \in [Y]
\label{Storage energy bounds.}
\\
& 0 \le \bar p^{w}_{w,y} \le \bar p^{\rm W,\max}_{w},
&& \forall w \in [W],\ \forall y \in [Y]
\label{Renewable installation bounds}
\\
& \bar f_{l,y-1} \le \bar f_{l,y},
&& \forall l \in [L],\ \forall y \in [Y]\setminus\{1\}
\label{Inter-temporal monotonicity of line expansion.}
\\
& \bar p_{g,y-1} \le \bar p_{g,y},
&& \forall g \in [G],\ \forall y \in [Y]\setminus\{1\}
\label{Inter-temporal monotonicity of generation capacity.}
\\
& \bar E_{b,y-1} \le \bar E_{b,y},
&& \forall b \in [B],\ \forall y \in [Y]\setminus\{1\}
\label{Inter-temporal monotonicity of energy capacity.}
\\
& \bar p^{w}_{w,y-1} \le \bar p^{w}_{w,y},
&& \forall w \in [W],\ \forall y \in [Y]\setminus\{1\}
\label{Renewable installation monotonicity.}
\\
& \bar E_{b,y} = 4\,\bar p^{B}_{b,y},
&& \forall b \in [B],\ \forall y \in [Y].
\label{Energy-to-power ratio.}
\end{align}
\end{subequations}
Constraints \eqref{Line capacity limits}--\eqref{Renewable installation bounds} set the expansion limits to transmission, conventional generation, battery storage, and wind generation. 
We characterize battery systems through energy capacity $\bar E_{b,y}$ and power rating $\bar p^{B}_{b,y}$ which are coupled by a fixed energy-to-power ratio as given in \eqref{Energy-to-power ratio.}.
We focus here on 4-hour storage, i.e., batteries with a capacity that supports the maximum discharging power for four hours. 
Constraints \eqref{Inter-temporal monotonicity of line expansion.}--\eqref{Renewable installation monotonicity.} impose inter-temporal monotonicity and keep installed capacities for transmission lines, conventional generation, battery energy, and wind power non-decreasing across years.

\subsubsection{DA operations}

The DA stage in \eqref{middlelevel} determines a cost-minimal generator dispatch given a forecast of uncertain wind power availability. Without loss of generality, we assume all other parameters to be known. 
The resulting DA objective is:
\begin{subequations}\label{daprob}
\begin{align}
    C_{d,y}^{\rm DA} = \sum_{t \in [T]} \Bigg(
\sum_{g \in [G]} c^{\rm G}_g\, p_{g,d,t,y}
+ \sum_{b \in [B]} c^{\rm B}_b(p^{dis,DA}_{b,d,t,y}  +p^{ch,DA}_{b,d,t,y})
+ \sum_{i \in [N]} c^{\rm shed}\, d^{shed,DA}_{i,d,t,y}
+ \sum_{w \in [W]} c^{\rm cur}\, w^{cur,DA}_{w,d,t,y}
\Bigg).
\label{eq:daobj}
\end{align}
The DA dispatch is solved for each day $d$ and year $y$ for $T=24$ one-hour timesteps indexed as $t\in[T]$. 
The objective is the sum of the production cost of conventional generators with cost coefficients $c^{\rm G}_g$, cost from charging and discharging of battery storage with cost coefficient $c^{\rm B}_b$, and costs for renewable curtailments and load shedding with cost coefficients $c^{\rm cur}$ and $c^{\rm shed}$, respectively. 
The DA dispatch must ensure power balance at each node $i$ of the network:
\begin{align}
\label{danodal} 
\!\!\!
\sum_{g \in [G]_i} p_{g,d,t,y}
+ \!\!\sum_{b \in [B]_i}\! p^{B,DA}_{b,d,t,y}
+ \!\!\sum_{l \in \delta_i^{in}}\! f^{DA}_{l,d,t,y}
- \!\!\sum_{l \in \delta_i^{out}}\! f^{DA}_{l,d,t,y}
+ \!\!\sum_{w \in [W]_i}\! p^{w,DA}_{w,d,t,y}
+ d^{shed,DA}_{i,d,t,y}
= d_{i,d,t,y},\nonumber\\\quad \forall i \in [N],\ \forall t \in [T],
\end{align}
i.e., at every node $i$, conventional generation $p_{g,d,t,y}$, storage injection $p^{B,DA}_{b,d,t,y}$, incoming and outgoing line flows $f^{DA}_{l,d,t,y}$, scheduled wind production $p^{w,DA}_{w,d,t,y}$, and load shedding $d^{shed,DA}_{i,d,t,y}$ must meet demand $d_{i,d,t,y}$.
We use $[G]_i$ as a shorthand for the index set of generators connected to bus $i$ (analogously for the set of battery storage $[B]$ and wind generators $[W]$).
Variables $f^{DA}_{l,d,t,y}$ denote the active power flow on transmission lines and we write $\delta_i^{in}$, $\delta_i^{out}$ for the index set of transmission lines that are ``entering'' or ``leaving'' bus $i$, respectively, as per their pre-defined orientation.

Conventional generator production $p_{g,d,t,y}$ is constrained by the installed capacity, and the upward and downward ramping constraints of the generators:
\begin{align}
& 0 \le p_{g,d,t,y} \le \bar p_{g,y},
&& \forall g \in [G],\ \forall t \in [T]
\label{Generation output limits.da}
\\
& p_{g,d,t+1,y} - p_{g,d,t,y} \le R^{\rm up}_{g},
&& \forall g \in [G],\ \forall t=1,\dots,T-1
\label{rampup}
\\
& p_{g,d,t,y} - p_{g,d,t+1,y} \le R^{\rm dn}_{g},
&& \forall g \in [G],\ \forall t=1,\dots,T-1
\label{rampdown}
\end{align}

We model power flow using the common DC power flow approximation in voltage angle notation. To this end, each bus $i$ is characterized by its voltage angle $\theta^{DA}_{i,d,t,y}$, which is set to  $\theta^{DA}_{{\rm ref},d,t,y} = 0$ for a reference bus $i={\rm ref}$.
The resulting power flow is then a linear mapping from the voltage angle difference of the buses to which line $l$ is connected to and the line's electrical properties. See also \cite{weinhold2020fast}. We capture this mapping as  $H$ and write:
\begin{align}
&f^{DA}_{l,d,t,y}
= \sum_{i \in [N]} H_{l,i}\theta^{DA}_{i,d,t,y},
&& \forall l \in [L],\ \forall t \in [T]
\label{transmissionlinkda}
\\
& \theta^{DA}_{\rm{ref},d,t,y} = 0,
&& \forall t \in [T]
\label{transmissionlinkda_zero}
\\
& -\bar f_{l,y} \le f^{DA}_{l,d,t,y} \le \bar f_{l,y},
&& \forall l \in [L],\ \forall t \in [T].
\label{Line capacity limitsda}
\end{align}
Constraint \eqref{Line capacity limitsda} bounds line flows by the installed transmission capacities $\bar f_{l,y}$.

We assume ideal storage and model their operation as:
\begin{align}
& -\bar p^{B}_{b,y} \le p^{B,DA}_{b,d,t,y} \le \bar p^{B}_{b,y},
&& \forall b \in [B],\ \forall t \in [T]
\label{Storage power limits.da}
\\
& e^{DA}_{b,d,t+1,y} = e^{DA}_{b,d,t,y} - p^{B,DA}_{b,d,t,y},
&& \forall b \in [B],\ \forall t=1,\dots,T-1
\label{Storage energy transition.da}
\\
& 0 \le e^{DA}_{b,d,t,y} \le \bar E_{b,y},
&& \forall b \in [B],\ \forall t \in [T]
\label{Storage energy bounds.da}
\\
& 0 \le p^{dis,DA}_{b,d,t,y},
&& \forall b \in [B],\ \forall t \in [T]
\label{Storage power obj-variable}
\\
& 0 \le p^{ch,DA}_{b,d,t,y},
&& \forall b \in [B],\ \forall t \in [T]
\label{Storage power obj-variable2}
\\
& p^{B,DA}_{b,d,t,y} = p^{dis,DA}_{b,d,t,y} - p^{ch,DA}_{b,d,t,y},
&& \forall b \in [B],\ \forall t \in [T].
\label{Storage power obj}
\end{align}
Constraint \eqref{Storage power limits.da} limits the net power output $p^{B,DA}_{b,d,t,y}$ of each storage unit. 
Constraint \eqref{Storage energy transition.da} governs the intertemporal dynamics of stored energy $e^{DA}_{b,d,t,y}$, and constraint \eqref{Storage energy bounds.da} bounds stored energy by the installed energy capacity $\bar E_{b,y}$. Constraints \eqref{Storage power obj-variable} and \eqref{Storage power obj-variable2} enforce nonnegativity of discharging and charging power, and constraint \eqref{Storage power obj} defines net power output as the difference between discharging and charging decisions.

Scheduled wind production $p^{w,DA}_{w,d,t,y}$ is limited by the available DA wind availability forecast $\bar{\xi}_{w,d,t,y}\in[0,1]$ and the installed wind capacity $\bar p^w_{w,y}$ and any unused wind production potential is defined as wind curtailment $w^{cur,DA}_{w,d,t,y}$:
\begin{align}
& 0 \le p^{w,DA}_{w,d,t,y} \le \bar \xi_{w,d,t,y}\,\bar p^w_{w,y},
&& \forall w \in [W],\ \forall t \in [T]
\label{windda}
\\
& w^{cur,DA}_{w,d,t,y}
= \bar \xi_{w,d,t,y}\,\bar p^w_{w,y} - p^{w,DA}_{w,d,t,y},
&& \forall w \in [W],\ \forall t \in [T].
\label{Corrective action limits.curda}
\end{align}

Finally, any load shedding is nonnegative and limited to the load demand:
\begin{align}
    & 0 \le d^{shed,DA}_{i,d,t,y} \le d_{i,d,t,y},
&& \forall i \in [N],\ \forall t \in [T].
\label{Corrective action limits.da}
\end{align}
\end{subequations}

\subsubsection{RT operations}

During RT operation, the system operator observes the realized wind power injection and resolves the resulting imbalances by finding a cost minimal re-dispatch of conventional generators and deciding on final battery charging and discharging, wind spillage, and load curtailment.
We use $p^{+}_{g,d,t,y}$ and $p^{-}_{g,d,t,y}$ to denote upward and downward balancing of generator $g$ at cost $c^{\rm up}_g$ and $c^{\rm dn}_g$, respectively. All other variables are defined analogously to the DA formulation above, but are decorated with a superscript $RT$. The resulting RT objective function is:
\begin{subequations}
\label{RTprob}
\begin{align}
&C_{d,y}^{\rm RT} =  \!\sum_{t\in[T]}\!\Bigg(\!\sum_{g\in[G]}(c^{\rm up}_g\, p^+_{g,d,t,y} \!- c^{\rm dn}_g\, p^-_{g,d,t,y}) +\!\! \sum_{i\in[N]} (c^{\rm shed}\, d^{shed,RT}_{i,d,t,y} + c^{\rm cur}\, w^{cur,RT}_{i,d,t,y}) +\!\! \sum_{b\in[B]} c^{\rm B}_b (p^{dis,RT}_{b,d,t,y} +\nonumber\\& p^{ch,RT}_{b,d,t,y})\Bigg).
\end{align}

The real-time nodal power balance enforces equality of supply and demand with respect to the DA generator schedule and the realized wind power availability $\tilde{\bm{\xi}}$, which limits the usable RT wind power injection $p_{w,t,t,y}^{w,RT}$:
\begin{align}
& \sum_{g \in [G]_i}\!\! \left(p_{g,d,t,y} + p^+_{g,d,t,y} \!\!- p^-_{g,d,t,y}\right)\! +\!\! \sum_{b \in [B]_i} p^{B,RT}_{b,d,t,y}\!
+\!\! \sum_{l \in \delta_i^{in}}\! f^{RT}_{l,d,t,y} -\!\! \sum_{l \in \delta_i^{out}} f^{RT}_{l,d,t,y} 
+\!\! \sum_{w \in [W]_i}\! p^{w,RT}_{w,d,t,y}
+ \nonumber\\&d^{shed,RT}_{i,d,t,y}
= d_{i,d,t,y}, \hspace{9cm}\forall i \in [N],\ \forall t \in [T].
\label{rt_first1}
\end{align}
All variables and sets in \eqref{rt_first1} are defined analogously to their DA equivalent in \eqref{danodal}. 
For a given DA dispatch decision $p_{g,d,t,y}$ generator re-dispatch in RT is limited by:
\begin{align}
& 0 \le p_{g,d,t,y} + p^+_{g,d,t,y} - p^-_{g,d,t,y} \le \bar p_g,
&& \forall g \in [G],\ \forall t \in [T]
\label{Generation output limits.rt}
\\
& \left(p_{g,d,t+1,y} + p^+_{g,d,t+1,y}\! - p^-_{g,d,t+1,y}\right)\!-\! \left(p_{g,d,t,y} + p^+_{g,d,t,y}\! - p^-_{g,d,t,y}\right)
\!\le R^{\rm up}_g,
&& \forall g \in [G],\ \forall t=1,\dots,T\!-\!1
\label{rampup_rt}
\\
& \left(p_{g,d,t,y} + p^+_{g,d,t,y}\! - p^-_{g,d,t,y}\right)\! -\! \left(p_{g,d,t+1,y} + p^+_{g,d,t+1,y} \!- p^-_{g,d,t+1,y}\right)\!
\le R^{\rm dn}_g,
&& \forall g \in [G],\ \forall t=1,\dots,T\!-\!1,
\label{ramprt}
\end{align}
and RT wind power is constrained by
\begin{align}
& 0 \le p^{w,RT}_{w,d,t,y} \le \tilde{\xi}_{w,d,t,y}\,\bar p^w_{w,y},
&& \forall w \in [W],\ \forall t \in [T]
\label{windda_rt}
\\
& w^{cur,RT}_{w,d,t,y}
= \tilde{\xi}_{w,d,t,y}\,\bar p^w_{w,y} - p^{w,RT}_{w,d,t,y},
&& \forall w \in [W],\ \forall t \in [T].
\label{Corrective action limits.currt}
\end{align}
\end{subequations}
Otherwise, we enforce the same constraints on power flow, transmission line limits, and battery storage dynamics in RT as we do in DA, with the only difference that RT variables are identified by their $RT$ superscript.

With all the DA and RT variables introduced, we can now define the maps $G^{\rm conv}$ and $G^{\rm ren}$ from \eqref{outer_renewable_target} as:
\begin{align}
    G^{\rm conv} = \sum_{t \in [T]} \Bigg(\sum_{g \in [G]}
\left(
p_{g,d,t,y}
+
p^+_{g,d,t,y}
-
p^-_{g,d,t,y}
\right)\Bigg), \quad G^{\rm ren} = \sum_{t \in [T]} \Bigg( \sum_{w \in [W]} p^{w,RT}_{w,d,t,y}\Bigg).
\end{align}

\section{Solution methodology} 
\label{sec:solution_methodology}

Our problem as formulated in \eqref{tri-level} is hard to solve. 
It is a multi-level program that scales with the number of scenario days and that eventually should be usable at the scale of practical expansion planning models.
This disqualifies the classic solution approach of including the RT problem into the outer level, retaining the DA problem as an inner level problem and formulating a single-level equivalent of the resulting bilevel problem using the inner problem's optimality conditions. 
This approach was pursued in \cite{pineda2016capacity,bylling2020impact}.
The resulting problem is an MPEC that requires additional reformulations leading to a large-scale mixed-integer problem. 
Even for moderately sized planning problems, the presence of time-coupling constraints renders this solution approach impractical \citep{pineda2016impact}.
Instead, we use a gradient-based solution method that leverages implicit differentiation of the inner optimization problems.
Our approach is similar to \cite{degleris5169721gradient} but additionally enforces the constraint on renewable generation as discussed in Section~\ref{ssec:planning_with_multi_stage_operations_and_operations_constraints} above.
Also in our problem \eqref{tri-level}, the upper-level objective depends not only on the investment vector $\bm{\theta}$, but also explicitly on the DA and RT operating decisions \textit{and} RT operations depend on DA outcomes. 
As a result, computing gradients of the outer objective with respect to $\bm{\theta}$ requires tracking both the direct impact of investments and the indirect impact transmitted through the DA problem. 
In the following, we derive the required gradients with respect to $\bm{\theta}$ and develop a projected stochastic gradient descent (SGD) procedure to solve the expansion planning problem \eqref{tri-level} efficiently.
Fig.~\ref{fig:tri_level_scheme} provides a schematic overview of the solution process.

\subsection{Projected stochastic gradient descent}
Each iteration $(k)$ of the SGD algorithm starts with a current vector of investment decisions $\bm{\theta}^{(k)}_y$ for each year $y \in [Y]$.
To evaluate this current decision iterate, we sample a subset (batch) of $B$ days $\mathcal B_{y}^{(k)} \subseteq [D]$. For these days we then solve the corresponding DA and RT models and evaluate the outer objective $J\big(\bm{\theta}^{(k)}_1,\dots,\bm{\theta}^{(k)}_Y\big)$ \textit{and its gradients} $\nabla_{\bm{\theta}^{(k)}_1}J,\dots,\nabla_{\bm{\theta}^{(k)}_Y}J$. 
To this end, 
for each sampled day $d \in \mathcal B_{y}^{(k)}$ in year $y$, we parametrize the DA and RT models with the respective renewable forecasts $\bar{\bm{\xi}}_{d,y}$, the actual renewable availability $\tilde{\bm{\xi}}_{d,y}$, the nodal demand $d_{i,d,t,y}$, and the current investments $\bm{\theta}_y^{(k)}$.
Recall that $\bm x_{d,y}^{\rm{DA},*}(\bm{\theta}^{(k)}_y)$ and $\bm x_{d,y}^{\rm{RT},*}(\bm{\theta}^{(k)}_y)$ are the optimal primal solutions for day $d$ and year $y$, and let $\bm\lambda_{d,y}^{\rm{DA},*}(\bm{\theta}^{(k)}_y)$ and $\bm\lambda_{d,y}^{\rm{RT},*}(\bm{\theta}^{(k)}_y)$ denote the corresponding optimal dual variables. 
To eventually obtain $\nabla_{\bm{\theta}_y}J$ to improve each $\bm{\theta}^{(k)}_y$ we need to compute $\nabla_{\bm{\theta}^{(k)}}\bm x_{d,y}^{\rm{DA},*}$ and $\nabla_{\bm{\theta}^{(k)}}\bm x_{d,y}^{\rm{RT},*}$, which we discuss in the following section.
To reduce notational clutter, we omit the explicit iteration index $(k)$ moving forward unless necessary.

\subsubsection{Obtaining gradients}
\label{sssec:implicit_differentiation}
The necessary gradient components $\nabla_{\bm{\theta}}\bm x_{d,y}^{\rm{DA},*}(\bm{\theta}_y)$ and $\nabla_{\bm{\theta}}\bm x_{d,y}^{\rm{RT},*}(\bm{\theta}_y)$ can be computed via implicit differentiation.
The mechanics of this approach have been discussed previously, e.g., in \cite{amos2017optnet,besancon2023diffopt} or in \cite{degleris5169721gradient,donti2017task} with energy-specific applications.
We repeat parts of the formalism here to introduce the notation and definitions needed for the derivation of some of our results below.

First, we write the primal--dual optimality conditions of the
DA and RT problems as KKT systems using the primal feasibility constraints as written in \eqref{middlelevel} and \eqref{innerlevel} and denote the 
corresponding Lagrange multipliers by
$\bm\lambda_d^{\rm{DA}}\ge \bm 0$ and $\bm\lambda_d^{\rm{RT}}\ge \bm 0$,
respectively.

We define $\mathcal{L }_{d,y}^{\rm{DA}}$ as the DA Lagrangian:
\begin{equation}
\mathcal{L}_{d,y}^{\rm{DA}}(\bm x_{d,y}^{\rm{DA}},\bm\lambda_{d,y}^{\rm{DA}};\bm\theta_y, \bar{\bm\xi}_{d,y})
:=
C_{d,y}^{\rm{DA}}(\bm x_{d,y}^{\rm{DA}})
+\bigl(\bm\lambda_{d,y}^{\rm{DA}}\bigr)^\top\!\Bigl(
A_{d,y}^{\rm{DA}}(\bm\theta_y)\,\bm x_{d,y}^{\rm{DA}}-b_{d,y}^{\rm{DA}}(\bm\theta_y, \bar{\bm\xi}_{d,y})
\Bigr).
\end{equation}
Then, the KKT conditions for the DA problem are:
\begin{subequations}\label{eq:kkt_DA}
\begin{align}
\text{(stationarity)}\quad
&\nabla_{\bm x}\, C_{d,y}^{\rm{DA}}(\bm x_{d,y}^{\rm{DA},*})
+\bigl(A_{d,y}^{\rm{DA}}(\bm\theta_y)\bigr)^\top \bm\lambda_{d,y}^{\rm{DA},*}
=\bm 0, \label{eq:kkt_DA_stationarity}\\
\text{(primal feasibility)}\quad
&A_{d,y}^{\rm{DA}}(\bm\theta_y)\,\bm x_{d,y}^{\rm{DA},*}
- b_{d,y}^{\rm{DA}}(\bm\theta_y, \bar{\bm\xi}_{d,y})\le \bm 0, \label{eq:kkt_DA_primal}\\
\text{(dual feasibility)}\quad
&\bm\lambda_{d,y}^{\rm{DA},*}\ge \bm 0, \label{eq:kkt_DA_dual}\\
\text{(complementarity)}\quad
&\bm\lambda_{d,y}^{\rm{DA},*}\odot\Bigl(
A_{d,y}^{\rm{DA}}(\bm\theta_y)\,\bm x_{d,y}^{\rm{DA},*}
-b_{d,y}^{\rm{DA}}(\bm\theta_y, \bar{\bm\xi}_{d,y})\Bigr)=\bm 0, \label{eq:kkt_DA_comp}
\end{align}
\end{subequations}
where $\odot$ is the elementwise product.

Next, we define $\mathcal{L}_{d,y}^{\rm{RT}}$ as the RT Lagrangian:
\begin{equation}
\mathcal{L}_{d,y}^{\rm{RT}}(\bm x_{d,y}^{\rm{RT}},\bm\lambda_{d,y}^{\rm{RT}};\bm\theta_y,\bm x_{d,y}^{\rm{DA},*},\tilde{\bm\xi}_{d,y})
:=
C_{d,y}^{\rm{RT}}(\bm x_{d,y}^{\rm{RT}})
+\bigl(\bm\lambda_{d,y}^{\rm{RT}}\bigr)^\top\!\Bigl(
A_{d,y}^{\rm{RT}}(\bm\theta_y,\bm x_{d,y}^{\rm{DA},*})\,\bm x_{d,y}^{\rm{RT}}
-b_{d,y}^{\rm{RT}}(\bm\theta_y,\bm x_{d,y}^{\rm{DA},*},\tilde{\bm\xi}_{d,y})
\Bigr).
\end{equation}
Then, the KKT conditions for the RT problem are:
\begin{subequations}\label{eq:kkt_RT}
\begin{align}
\text{(stationarity)}\quad
&\nabla_{\bm x}\, C_{d,y}^{\rm{RT}}(\bm x_{d,y}^{\rm{RT},*})
+\bigl(A_{d,y}^{\rm{RT}}(\bm\theta_y,\bm x_{d,y}^{\rm{DA},*})\bigr)^\top
\bm\lambda_{d,y}^{\rm{RT},*}
=\bm 0, \label{eq:kkt_RT_stationarity}\\
\text{(primal feasibility)}\quad
&A_{d,y}^{\rm{RT}}(\bm\theta_y,\bm x_{d,y}^{\rm{DA},*})\,\bm x_{d,y}^{\rm{RT},*}
- b_{d,y}^{\rm{RT}}(\bm\theta_y,\bm x_{d,y}^{\rm{DA},*},\tilde{\bm\xi}_{d,y})\le \bm0,
\label{eq:kkt_RT_primal}\\
\text{(dual feasibility)}\quad
&\bm\lambda_{d,y}^{\rm{RT},*}\ge \bm 0, \label{eq:kkt_RT_dual}\\
\text{(complementarity)}\quad
&\bm\lambda_{d,y}^{\rm{RT},*}\odot\Bigl(
A_{d,y}^{\rm{RT}}(\bm\theta_y,\bm x_{d,y}^{\rm{DA},*})\,\bm x_{d,y}^{\rm{RT},*}
-b_{d,y}^{\rm{RT}}(\bm\theta_y,\bm x_{d,y}^{\rm{DA},*},\tilde{\bm\xi}_{d,y})
\Bigr)=\bm 0. \label{eq:kkt_RT_comp}
\end{align}
\end{subequations}
With slight abuse of notation, we write the KKT conditions in \eqref{eq:kkt_DA} and \eqref{eq:kkt_RT} as KKT operators $\kappa_{d,y}^{\rm{DA}}$ and $\kappa_{d,y}^{\rm{RT}}$:
\begin{align}
\kappa_{d,y}^{\rm{DA}}(\bm z_{d,y}^{\rm{DA}},\bm\theta_y, \bar{\bm\xi}_{d,y})&=\bm 0,
\quad \bm z_{d,y}^{\rm{DA}}:=\bigl(\bm x_{d,y}^{\rm{DA}},\bm\lambda_{d,y}^{\rm{DA}}\bigr),
\label{eq:da_kkt_op}\\
\kappa_{d,y}^{\rm{RT}}(\bm z_{d,y}^{\rm{RT}},\bm\theta_y,\bm x_{d,y}^{\rm{DA},*},\tilde{\bm\xi}_{d,y})&=\bm 0,
\quad \bm z_{d,y}^{\rm{RT}}:=\bigl(\bm x_{d,y}^{\rm{RT}},\bm\lambda_{d,y}^{\rm{RT}}\bigr),
\label{eq:rt_kkt_op}
\end{align}
where each operator collects the corresponding stationarity, primal feasibility,
dual feasibility, and complementarity conditions rewritten into equality-constrained form. Under standard regularity conditions (in particular local invertibility of the Jacobian of the KKT operators) the \textit{implicit function theorem} \citep{dontchev2009implicit} suggests that the implicit solution maps from parameters $\bm{\theta}_y$ to the optimal primal--dual solutions $z_{d,y}^{\rm DA,*}$, $z_{d,y}^{\rm RT,*}$ are differentiable and their derivatives can be expressed as \citep{amos2017optnet}:
\begin{equation}
\frac{\partial \bm z_{d,y}^{\rm{DA},*}}{\partial \bm\theta_y}
=
-\Bigl[\partial_{\bm z}\kappa_{d,y}^{\rm{DA}}\!\bigl(\bm z_{d,y}^{\rm{DA},*}(\bm\theta_y),\bm\theta_y, \bar{\bm\xi}_{d,y}\bigr)\Bigr]^{-1}
\,\partial_{\bm\theta_y}\kappa_{d,y}^{\rm{DA}}\!\bigl(\bm z_{d,y}^{\rm{DA},*}(\bm\theta_y),\bm\theta_y, \bar{\bm\xi}_{d,y}\bigr),
\label{eq:ift_DA}
\end{equation}
\begin{equation}
\frac{\partial \bm z_{d,y}^{\rm{RT},*}}{\partial \bm\theta_y}
=
-\Bigl[\partial_{\bm z}\kappa_{d,y}^{\rm{RT}}\!\bigl(\bm z_{d,y}^{\rm{RT},*}(\bm\theta_y),\bm\theta_y,\bm x_{d,y}^{\rm{DA},*}(\bm\theta_y),\tilde{\bm\xi}_{d,y}\bigr)\Bigr]^{-1}
\,\partial_{\bm\theta_y}\kappa_{d,y}^{\rm{RT}}\!\bigl(\bm z_{d,y}^{\rm{RT},*}(\bm\theta_y),\bm\theta_y,\bm x_{d,y}^{\rm{DA},*}(\bm\theta_y),\tilde{\bm\xi}_{d,y}\bigr).
\label{eq:ift_RT}
\end{equation}

Notably, the RT problem has a direct dependence on the investment parameters $\bm \theta_y$ and an indirect dependence on the optimal DA dispatch $\bm x^{DA,*}_{d,y}$. 
We resolve the resulting dependencies in \eqref{eq:ift_RT} via the chain rule:
\newcommand{\virtheight}{\vphantom{\left.\frac{\partial \bm x_{d,y}^{\rm{RT},*}}{\partial \bm\theta_y}\right|_{\bm x_{d,y}^{\rm{DA},*}}}}
\begin{equation}
\frac{\partial \bm x_{d,y}^{\rm{RT},*}}{\partial \bm\theta_y}
=
\underbrace{
\left.\frac{\partial \bm x_{d,y}^{\rm{RT},*}}{\partial \bm\theta_y}\right|_{\bm x_{d,y}^{\rm{DA},*}}}_{\bm J_{d,y}^{\rm{RT},\theta_y}}
+
\underbrace{\virtheight
\frac{\partial \bm x_{d,y}^{\rm{RT},*}}{\partial \bm x_{d,y}^{\rm{DA},*}}}_{\bm J_{d,y}^{\rm{RT},x}}
\cdot
\underbrace{\virtheight
\frac{\partial \bm x_{d,y}^{\rm{DA},*}}{\partial \bm\theta_y}}_{\bm J_{d,y}^{\rm{DA},\theta_y}}.
\label{eq:chain_rule_rt}
\end{equation}
We then extract the blocks corresponding to the primal variables from $\frac{\partial \bm z_{d,y}^{\rm{DA},*}}{\partial \bm\theta_y}$ and
$\frac{\partial \bm z_{d,y}^{\rm{RT},*}}{\partial \bm\theta_y}$ to form $\bm J_{d,y}^{\rm{DA},\theta_y}$ and the ``direct'' term $\bm J_{d,y}^{\rm{RT},\theta_y}$ in \eqref{eq:chain_rule_rt}.
To obtain the ``indirect'' Jacobian $\bm J_{d,y}^{\rm{RT},x}=\frac{\partial \bm x_{d,y}^{\rm{RT},*}}{\partial \bm x_{d,y}^{\rm{DA},*}}$, we apply the same implicit function theorem argument to the RT KKT system while treating $\bm x_{d,y}^{\rm{DA},*}$ as an explicit parameter:
\begin{equation}
\frac{\partial \bm z_{d,y}^{\rm{RT},*}}{\partial \bm x_{d,y}^{\rm{DA},*}}
=
-\Bigl[\partial_{\bm z}\kappa_{d,y}^{\rm{RT}}\!\bigl(\bm z_{d,y}^{\rm{RT},*}(\bm\theta_y),\bm\theta_y,\bm x_{d,y}^{\rm{DA},*}(\bm\theta_y),\tilde{\bm\xi}_{d,y}\bigr)\Bigr]^{-1}
\,\partial_{\bm x^{\rm{DA}}}\kappa_{d,y}^{\rm{RT}}\!\bigl(\bm z_{d,y}^{\rm{RT},*}(\bm\theta_y),\bm\theta_y,\bm x_{d,y}^{\rm{DA},*}(\bm\theta_y),\tilde{\bm\xi}_{d,y}\bigr),
\label{eq:ift_RT_xDA}
\end{equation}
and we again take the primal block of \eqref{eq:ift_RT_xDA} to obtain $\bm J_{d,y}^{\rm{RT},x}$.

We can now form the gradient of $J(\bm\theta)$ by combining the
investment cost derivatives with the implicit differentiation of the DA
and RT operating costs. We write
$\bm{\theta}= (\bm\theta_1,\dots,\bm\theta_Y)$ and define the feasible
investment set as
$\mathcal H=\mathcal H_1\times\cdots\times\mathcal H_Y$. The full
gradient is the stacked vector
$
\nabla_{\bm\theta} J(\bm\theta)
=
\big(
\nabla_{\bm\theta_1}J(\bm\theta_1),
\dots,
\nabla_{\bm\theta_Y}J(\bm\theta_Y)
\big),
$
where each year-$y$ block is
\begin{align}
\nabla_{\bm\theta_y}J(\bm\theta_y)
=
\left(\frac{1}{1+r}\right)^{\tau(y)}
\Bigg[
\nabla_{\bm\theta_y} C_y^{\mathrm{inv}}(\bm\theta_y)
+
\frac{365}{D}
\sum_{d \in [D]}
\Big[
\bigl(\bm J_{d,y}^{\rm{RT},\theta_y}\bigr)^{\!\top}
\nabla_{\bm x_{d,y}^{\rm{RT}}}
\bm C_{d,y}^{\rm{RT}}
\big(
\bm x_{d,y}^{\rm{RT},*}
(\bm\theta_y,\bm x_{d,y}^{\rm{DA},*}(\bm\theta_y))
\big)
\nonumber\\
+
\bigl(\bm J_{d,y}^{\rm{DA},\theta_y}\bigr)^{\!\top}
\Big(
\nabla_{\bm x_{d,y}^{\rm{DA}}}
\bm C_{d,y}^{\rm{DA}}
\big(
\bm x_{d,y}^{\rm{DA},*}(\bm\theta_y)
\big)
+
\bigl(\bm J_{d,y}^{\rm{RT},x}\bigr)^{\!\top}
\nabla_{\bm x_{d,y}^{\rm{RT}}}
\bm C_{d,y}^{\rm{RT}}
\big(
\bm x_{d,y}^{\rm{RT},*}
(\bm\theta_y,\bm x_{d,y}^{\rm{DA},*}(\bm\theta_y))
\big)
\Big)
\Big]
\Bigg].
\label{eq:tri_level_gradient_block}
\end{align}

\subsubsection{Projected gradient step}
\label{sssec:gradient_descent_algortihm}

We use the the gradients $\nabla_{\bm\theta_y^{(k)}} J\big(\bm\theta_y^{(k)}\big)$ to solve problem~\eqref{tri-level} by iteratively updating the investment vector $\theta^{(k)}$.
We collect the stochastic gradients for all years $y$ in $\bm{g}^{(k)}$ (see also Step~8 in Algorithm~\ref{alg:sgd_tri-level} below) and define a stepsize $\alpha^{(k)} >0$.
Because an unconstrained gradient step
$\bm\theta^{(k)} - \alpha^{(k)} \bm g^{(k)}$ does not generally satisfy the investment feasibility constraints
\eqref{limit_inv}--\eqref{monocity_inv}, we apply an additional projection step onto the feasible investment set. 
We define the projection operator as
 $ \Pi_{\mathcal H}(\bm v) := \arg\min_{\bm u \in \mathcal H} \|\bm u - \bm v\|_2^2.$
Then the projected gradient step is:
\begin{equation}
\bm\theta^{(k+1)}
=
\arg\min_{\bm u \in \mathcal H}
\|\bm u - (\bm\theta^{(k)} - \alpha^{(k)} \bm g^{(k)})\|_2^2.
\label{eq:projected_update}
\end{equation}
The projection operations solve a strongly convex quadratic program that returns the feasible point closest (in Euclidean distance) to the unconstrained gradient step.
This projection enforces the required investment bounds and intertemporal monotonicity at every iteration while preserving as much of the descent direction as possible. 
See also \cite{beck2017first}.
Fig.~\ref{fig:tri_level_scheme} summarizes the entire process.
Notably, the projection step can only enforce constraints on $\bm{\theta}$. The following section now addresses dealing with the renewable target constraint \eqref{outer_renewable_target}.

\subsubsection{Computational complexity}

The practical computation of the needed gradients, i.e., $
\nabla_{\bm\theta} J(\bm\theta)$, is efficient. 
After solving the DA and RT problems, instead of explicitly computing the entire Jacobian $\frac{\partial \bm{x}^*_{d,y}}{\partial \bm{\theta_y}}$, we can directly evaluate the adjoint product $\left(\frac{\partial \bm{x}^*_{d,y}}{\partial \bm{\theta}_y}\right)^{\!\top}\bm{v}$, with
$\bm{v}=\frac{\partial J_y}{\partial \bm{x}^*_{d,y}}$.
This can be computed efficiently using the available primal--dual optimal solution and numerical auto-differentiation \citep{amos2017optnet,besancon2023diffopt}.

Overall, at each iteration $(k)$, for each year $y \in [Y]$, estimating the year-$y$ gradient block $\nabla_{\theta_y} J(\bm\theta_y)$ from a batch of $B$ sampled scenarios requires $B$ solves of the DA model and $B$ solves of the RT model.
Computing the gradient for each sampled scenario and DA/RT solution
requires solving a linear system of similar scale to the original optimization problem.
As a result, a batch gradient evaluation requires solving $2YB$ linear systems associated with the DA and RT KKT operators.
As a result, the problem is solved by repeating a computational procedure of manageable size, even for larger-scale operations problems, which also lends itself to parallelization.

In contrast, traditional MPEC reformulations of bilevel problems that embed all KKT conditions and complementarity constraints into a single optimization problem, quickly become intractable as the the problem size increases exponentially.
Consequently, although each batch iteration introduces additional linear-system solves for sensitivity analysis, the overall computational burden in the proposed approach grows in a controlled and scalable manner relative to the single-level equivalent formulation \citep{rosemberg2025general,besancon2023diffopt}.

\begin{figure}
\centering
\resizebox{0.9\columnwidth}{!}{%
\begin{tikzpicture}[scale=0.65,transform shape,
    node distance=0.9cm,
    box/.style={
        draw=#1!70!black,
        thick,
        rounded corners,
        align=center,
        fill=#1!12,
        inner sep=3pt,
        text width=3.1cm,
        minimum height=0.85cm
    },
    box/.default=gray,
    arr/.style={->, thick, draw=#1!75!black},
    arr/.default=gray,
    lbl/.style={font=\scriptsize, text=black!70},
    txt/.style={font=\scriptsize},
    >=Latex
]

\node[box=blue, txt] (inv) {%
\textbf{Outer Investment planning level}\\[0.1em]
$\bm\theta_y=(\bar{\bm p}_y,\bar{\bm f}_y,\bar{\bm p}_y^{w},\bar{\bm p}_y^{B},\bar{\bm E}_y)$\\
$
\bm\theta
=
(\bm\theta_1,\dots,\bm\theta_Y)
$
};

\node[box=purple, txt, below=of inv] (da) {%
\textbf{Inner DA operational level}\\[0.2em]
 $\forall d \in [D], \forall y \in [Y]$ :\\[0.2em]
$\bm x_{d,y}^{\mathrm{DA},*}(\bm\theta_y)$
};

\node[box=purple, txt, below=of da] (rt) {%
\textbf{Inner RT operational level}\\[0.2em]
 $\forall d \in [D], \forall y \in [Y]$:\\[0.2em]
$\bm x_{d,y}^{\mathrm{RT},*}\!\big(\bm\theta_y,\bm x_{d,y}^{\mathrm{DA},*}(\bm\theta_y)\big)$
};

\node[box=teal, txt, right=2.2cm of rt] (grad) {%
\textbf{Differentiable component}\\[0.2em]
Jacobians + chain rule\\[0.2em]
$\bm g^{(k)}$
};

\node[box=green, txt, above=1.05cm of grad] (proj) {%
\textbf{Projection step}\\[0.2em]
$\bm\theta^{(k+1)}=\Pi_{\mathcal H}\!\left(\bm\theta^{(k)}-\alpha^{(k)}\bm g^{(k)})\right)$
};

\node[draw=gray!90, dashed, rounded corners, align=center, txt, left=1.2cm of da] (forecast) {$\bm \bar{\xi}_{d,y}$ \\$ \forall d\in [D], \forall y \in [Y] $\\Renewable injection\\ forecast};
\node[draw=gray!90, dashed, rounded corners, align=center, txt, left=1.2cm of rt] (real) {$\tilde{\bm \xi}_{d,y}$ \\$ \forall d\in [D], \forall y \in [Y] $\\Renewable injection \\realization};

\draw[arr=blue]   (inv) -- node[right, lbl]{parameters $\bm \theta_y$} (da);
\draw[arr=orange] (da)  -- node[right, lbl]{DA schedule $\bm x^{\text{DA},*}_{d,y}$} (rt);

\draw[arr] (forecast.east) -- (da.west);

\draw[arr] (real.east) -- (rt.west);

\draw[arr=purple] (rt.east) -- node[above, lbl]{$\bm J_{d,y}^{\mathrm{RT},\theta_y}$, $\bm J_{d,y}^{\mathrm{RT},\bm x}$} (grad.west);

\draw[arr=orange]
  (da.east) to[bend right=25]
  node[midway, above, sloped, lbl]{$\bm J_{d,y}^{\mathrm{DA},\theta_y}$}
  (grad.north west);

\draw[arr=teal]   (grad.north) -- node[right, lbl]{gradient step} (proj.south);
\draw[arr=green]  (proj.north) |- node[right, lbl]{projected $\bm \theta^{(k+1)}$} (inv.east);

\end{tikzpicture}%
}
\caption{Schematic representation of our proposed investment--operation modeling framework and the projected SGD algorithm used to solve it. 
}
\label{fig:tri_level_scheme}
\end{figure}

\subsection{Renewable-constrained planning objective with dynamic penalties}

The renewable-energy target \eqref{outer_renewable_target} depends on the operational outcomes of both the DA and RT stages and can therefore only be implicitly enforced through the investment decisions $\bm{\theta}$.
To remain compatible with the proposed gradient-based solution method, we enforce the renewable requirement through a dynamic penalty formulation.
We define the in-batch renewable shortfall, i.e., the total amount of renewable production that is \textit{missing} to meet the target renewable production share of $\eta_y$, over the days from batch $\mathcal{B}_{y}$ as:
\begin{equation}
\label{ren_violation}
\nu_{y,\mathcal{B}_y}(\bm \theta_y)
=
 \frac{365}{B} \sum_{d \in \mathcal{B}_y} \sum_{t \in [T]} \Bigg[ \eta_y \Bigg( \sum_{g \in [G]}
\left(
p_{g,d,t,y}
+
p^+_{g,d,t,y}
-
p^-_{g,d,t,y}
\right)
+
\sum_{w \in [W]} p^{w,RT}_{w,d,t,y}
\Bigg)
-
\sum_{w \in [W]} p^{w,RT}_{w,d,t,y}
\Bigg],
\end{equation}
Notably, the renewable target is achieved when $\nu_y\le0$. 

To internalize information on this constraint into the gradient update steps of $\bm{\theta}$ we augment the problem objective $\bm{J}(\theta)$ to the Lagrangian:
\begin{equation}
\mathcal{L}(\bm \theta, \bm \lambda)
=
J(\bm \theta)
+
\sum_{y \in [Y]} \lambda_y \nu_{y,\mathcal{B}_y}(\bm \theta_y),
\label{outer_obj}
\end{equation}
where $\bm \lambda$ denotes the vector of Lagrange multipliers associated with the renewable constraints.
At each iteration $(k)$, these multipliers are updated via dual ascent, i.e., we have different $\lambda_y^{(k)}$ for each iteration additional guide the $\bm{\theta}$ update towards meeting this constraint. See also \cite{wang2023learning}.

Because the renewable target shortfall $\nu_{{y},\mathcal{B}_y}$ is sensitive to the specific properties of the scenarios in batch $\mathcal{B}_y$, 
updating $\bm{\lambda}$ based on a single-batch $\nu_{{y},\mathcal{B}_y}$ would produce unstable behavior. 
For example, one iteration may sample a set of low-wind days, producing a large shortfall and sharply increasing the penalty multiplier. A subsequent iteration may contain samples from windy spring periods with excess renewable generation, causing the penalty to collapse. 
To overcome this and stabilize the dual updates, we use a momentum-based variance reduction technique by applying a smoothing exponential moving average (EMA) across the previous iterations to the renewable-target shortfall \citep{cutkosky2019momentum,cui2025two}:
\begin{equation}
\label{ema_update}
V_y^{(k)}
=
(1-\beta)V_y^{(k-1)}
+
\beta  \nu_{{y},\mathcal B_{y}^{(k)}}(\bm \theta_y^{(k)}),
\end{equation}
where $\beta \in (0,1)$ is the smoothing parameter.
The smoothed constraint violation allows the algorithm to react to persistent characteristics of all scenarios rather than batch-specific properties. 
Using the smoothed violation estimate, we update the dual multiplier through a projected dual ascent step:
\begin{equation}
\label{dual_ascent_update}
\lambda_y^{(k+1)}
\leftarrow
\min\left\{
\bar\lambda,
\max\left(0,\lambda_y^{(k)}+\alpha_\lambda V_y^{(k)}\right)
\right\},
\end{equation}
where $ \bar{\bm \lambda}$ is a pre-defined upper bound and $\alpha_{\lambda}$ denotes the dual stepsize.
If $V_y^{(k)} > 0$, the system is persistently violating the renewable target, causing the multiplier $\lambda_y^{(k)}$ to increase. In subsequent iterations, the renewable penalty term in the outer objective becomes more expensive, incentivizing additional renewable investment capacity to restore compliance. Conversely, if $V_y^{(k)} < 0$, the system is over-satisfying the renewable requirement, causing the multiplier $\lambda_y^{(k)}$ to reduce.
This means that persistent constraint fulfillment is tracked within the dual update step.
However, because the renewable target constraint is an inequality constraint, we do not include information of this constraint into the $\bm{\theta}$ update step if $\nu_{y,\mathcal B_{y}^{(k)}}(\bm \theta_y^{(k)})\le0$. See also Step~7 in Algorithm~\ref{alg:sgd_tri-level} where we define ``effective'' multipliers $ \omega_y = \mathbb{1}_{\{V_y>0\}} \lambda_y$, with $\mathbb{1}_{\{\cdot\}}$ being the indicator function.


\begin{algorithm}
\caption{Projected SGD for Uncertainty-Aware Power System Planning}
\label{alg:sgd_tri-level}
\DontPrintSemicolon
\SetKwInOut{Input}{Input}
\SetKwInOut{Output}{Output}

\Input{%
  Feasible initial investment vectors $\bm \theta_y^{(0)} \in \mathcal{H}_y \;\forall y \in [Y]$; \\
  initial dual multipliers $\bm \lambda^{(0)} = \bm \lambda_0$; initial EMA $V_y^{(0)} = 0 \;\forall y \in [Y]$; \\
  max. number of outer iterations $K$; batch size $B$, with
$|\mathcal B_{y}^{(k)}|=B$ for all $k$ and $y$; \\
  gradient descent stepsize  $\alpha $; dual learning rate $\alpha_\lambda$; EMA smoothing factor $\beta$; \\
  penalty upper bound $\bar\lambda$; set of sample days $[D] = \{1,\dots,D\}$.
}
\Output{%
Optimal investment plan
  $\bm \theta^* = (\bm \theta_1^*,\dots,\bm \theta_Y^*)$.
}\;

\For{$k = 0,1,\dots,K-1$}{
\ForEach{$y \in [Y]$}{
  1. Sample scenario batch
   $\mathcal B_{y}^{(k)} \subseteq [D]$ uniformly at random.\;
   
  2. \ForEach{$d \in \mathcal B_{y}^{(k)}$}{
    Solve DA and RT problem sequence with current $\bm \theta_y^{(k)}$,
    and obtain optimal operational decisions $\bm x_{d,y}^{\mathrm{DA},*}(\bm \theta_y^{(k)})$ and
    $\bm x_{d,y}^{\mathrm{RT},*}(\bm \theta_y^{(k)}, \bm x_{d,y}^{\mathrm{DA},*}(\bm \theta_y^{(k)}))$\;
  }

    3. Evaluate batch renewable shortfall $\nu_{y,\mathcal B_{y}^{(k)}}(\bm \theta_y^{(k)})$ using \eqref{ren_violation} \;
    4. Update EMA renewable shortfall:
    $
    V_y^{(k)} \leftarrow
    \begin{cases}
    \nu_{y,\mathcal B_{y}^{(k)}}(\bm \theta_y^{(k)}), & k=0,\\
    (1-\beta)V_y^{(k-1)}+\beta \nu_{y,\mathcal B_{y}^{(k)}}(\bm \theta_y^{(k)}), & k>0,
    \end{cases}
    $ \;}

  5. Compute the planning objective-gradient blocks:
 $
  \bm g_{J}^{(k)}
  \gets
  \big(
  \bm g_{J,1}^{(k)},
  \dots,
  \bm g_{J,Y}^{(k)}
  \big)
  $
  where
  $
    \bm g_{J,y}^{(k)} \; \gets\; \Big(\frac{1}{1+r}\Big)^{\tau(y)} \Bigg[ \nabla_{\bm \theta_y} C^{\mathrm{inv}}_y(\bm \theta_y^{(k)})
    + \frac{365}{B}\sum_{d \in \mathcal B_{y}^{(k)}}
      \nabla_{\bm\theta_y} C^{\mathrm{DA+RT}}_{d,y}(\bm \theta_y^{(k)}) \Bigg] 
      $\;
6. Compute renewable constraint-gradient blocks:
$
\bm g_{\nu}^{(k)}
\gets
\big(
\bm g_{\nu_1}^{(k)},
\dots,
\bm g_{\nu_Y}^{(k)}
\big)
$
where
$
\bm g_{\nu_y}^{(k)}
\gets
\nabla_{\bm \theta_y}
\big[
\nu_{y,\mathcal B_{y}^{(k)}}(\bm \theta_y^{(k)})
\big],
\qquad
\forall y\in[Y].
$\;
7. Define effective penalty multipliers:
$
\bm\omega^{(k)}
\gets
\big(
\omega_1^{(k)},
\dots,
\omega_Y^{(k)}
\big)
$
where
$
\omega_y^{(k)}
\gets
\mathbb{1}_{\{V_y^{(k)}>0\}}
\lambda_y^{(k)},
\qquad
\forall y\in[Y].
$
\;
8. Compute the full primal descent direction:
$
\bm g^{(k)}
\gets
\bm g_{J}^{(k)}
+
\big(
\omega_1^{(k)}\bm g_{\nu_1}^{(k)},
\dots,
\omega_Y^{(k)}\bm g_{\nu_Y}^{(k)}
\big)
$
\;

9. Projected gradient step:
$
\bm\theta^{(k+1)}
\leftarrow
\Pi_{\mathcal H}
\big(\bm\theta^{(k)}-\alpha\bm g^{(k)}\big).
$


10. Bounded penalty update:
$
\lambda_y^{(k+1)}
\leftarrow
\min\left\{
\bar\lambda,
\max\left(0,\lambda_y^{(k)}+\alpha_\lambda V_y^{(k)}\right)
\right\},
\qquad
\forall y\in[Y].
$
\;
11. Stopping criterion: \\
\If{$\bm\theta^{(k)}$ \text{satisfies approximate stationarity (see Theorem~\ref{theorem1})} \textbf{or} $k=K-1$}{
\textbf{break};
}
}
Set $\bm \theta^*
\leftarrow
\bm \theta^{(K)}$ \;
\end{algorithm}

 \subsection{Convergence analysis}
 \label{sec:Convergence Analysis}
This section provides some theoretical insights on the convergence of the presented gradient descent approach.
Our analysis builds on the results by \cite{degleris5169721gradient}, but extends them to the multi-year, multi-stage operational structure with renewable targets considered in this work. 
Since Algorithm~\ref{alg:sgd_tri-level} employs a projected stochastic gradient method with batching and an EMA-stabilized penalty update, we establish convergence in expectation of the projected stationarity residuals associated with the augmented objective \eqref{outer_obj}.
In particular, we show that the iterates converge to a first-order stationary region whose size is determined by the variance of the stochastic gradient estimator.

We first introduce and discuss some necessary regularity assumptions.
Recall that for each year
$\mathcal H_y \subset \mathbb R^{n}$
denotes the feasible investment set defined by
\eqref{limit_inv}--\eqref{monocity_inv}.

\begin{assumption}[Compact Feasible Set]
For every year $y\in[Y]$, the feasible set $\mathcal H_y$ is nonempty, convex, and compact.
\label{assumption1}
\end{assumption}
This assumption mainly requires investments to be continuous. 
This is arguably the strongest assumption that we make in this paper, but that is common in practical planning models, e.g., \cite{osti_1788425}. 
A relaxation of this assumption may be possible by invoking additional techniques, e.g., along the lines of \cite{boland2018combining}, but we consider this subject to further research and beyond the scope of this paper.

\begin{assumption}[Feasibility]
For every $\bm \theta_y \in \mathcal H_y$, every day $d$, and every year
$y\in[Y]$, the DA problem \eqref{middlelevel} and RT problem
\eqref{innerlevel} are feasible.
\label{assumption2}
\end{assumption}

\begin{assumption}[Uniqueness]
For every $\bm \theta_y \in \mathcal H_y$, every day $d$, and every year $y \in [Y]$, the DA problem \eqref{middlelevel} admits a unique optimal solution 
$\bm x_{d,y}^{\rm{DA},*}(\bm \theta_y)$, and the RT problem \eqref{innerlevel} admits a unique optimal solution 
$\bm x_{d,y}^{\rm{RT},*}(\bm \theta_y, \bm x_{d,y}^{\rm{DA},*}(\bm \theta_y))$.
\label{assumption3}
\end{assumption}
Assumptions~\ref{assumption2} and \ref{assumption3} are non-critical and easily met by many practical market-clearing formulations, which typically model generator production costs as strictly convex quadratic functions, which
promotes the uniqueness of the primal optimal solution. 
When strict convexity is not guaranteed (e.g., under linear production costs), we can add a small quadratic
regularization term (e.g., $\gamma \|\bm x\|_2^2$ with $\gamma>0$) to ensure a unique and differentiable
solution map. 
To ensure feasibility, we can include load-shedding and renewable-curtailment variables with high cost coefficients.

\begin{assumption}[Regularity of DA and RT problems]
\label{assumption4}
For every $\bm\theta_y \in \mathcal H_y$, every day $d$, and every year $y\in[Y]$ the following holds:
\begin{enumerate}[label=(\roman*)]
    \item  At the optimal solution of the DA problem \eqref{middlelevel}, the gradients of the active constraints are linearly independent (i.e., the linear independence constraint qualification (LICQ) holds), and the problem is nondegenerate,
    \item At the optimal solution of the RT problem \eqref{innerlevel}, the gradients of the active constraints are linearly independent, and the problem is nondegenerate.
\end{enumerate}
\end{assumption}
Assumption~\ref{assumption4} imposes standard regularity conditions for
constrained optimization problems. The linear independence constraint
qualification (LICQ) requires that, at the optimal solution, the
gradients of all active inequality constraints are linearly independent.
In other words, no active constraint gradient can be expressed as a
linear combination of the others. The strict complementarity condition
requires that every active inequality constraint has a strictly positive
associated Lagrange multiplier. This condition prevents degeneracies in
the KKT system and rules out cases in which multiple multiplier vectors
satisfy the optimality conditions.
In our proposed expansion-planning framework,
Assumption~\ref{assumption4} is generally expected to hold under normal
operating conditions. The DA and RT problems are convex optimization
problems with linear network and operational constraints. LICQ is
satisfied whenever the active constraints at the optimum do not contain
redundant binding relations, which is typically the case in practical
power system models. Likewise, strict complementarity fails only in
exceptional situations where an active constraint has a zero multiplier
or where multiple active constraints become exactly dependent. Such
situations are uncommon in practice and usually arise only under highly
specific operating conditions. Therefore,
Assumption~\ref{assumption4} represents a standard regularity condition
that is typically satisfied in practical linearized optimal power flow problems.
Together with the uniqueness assumption
(Assumption~\ref{assumption3}), these regularity conditions ensure that
the KKT system is locally well posed and that its Jacobian with respect
to the primal--dual variables is nonsingular at the solution.
Consequently, the implicit function theorem guarantees local
differentiability of the DA and RT solution maps with respect to the
investment parameters $\bm\theta_y$, as discussed in
Section~\ref{sssec:implicit_differentiation} above.

\begin{assumption}[Unbiased Estimators and Bounded Variance]
\label{assumption5}
At iteration $(k)$, let $\bm g_{J}^{(k)}$ denote the stochastic gradient
estimator of the objective function $J$, and let
\[
\bm g_{\nu}^{(k)}
=
\big(
\bm g_{\nu_1}^{(k)},
\dots,
\bm g_{\nu_Y}^{(k)}
\big)
\]
denote the stacked stochastic gradient estimator of the renewable
constraint violations. We define
\[
\bm\nu(\bm\theta^{(k)})
=
\big(
\nu_1(\bm\theta_1^{(k)}),
\dots,
\nu_Y(\bm\theta_Y^{(k)})
\big),
\qquad
\nabla_{\bm\theta}\bm\nu(\bm\theta^{(k)})
=
\big(
\nabla_{\bm\theta_1}\nu_1(\bm\theta_1^{(k)}),
\dots,
\nabla_{\bm\theta_Y}\nu_Y(\bm\theta_Y^{(k)})
\big).
\]
Let $\nu_{{y},\mathcal B_{y}^{(k)}}^{(k)}$ denote the stochastic value of the renewable
constraint violation for year $y$, computed from the batch
$\mathcal B_{y}^{(k)}$. The estimators satisfy
\[
\mathbb E
\big[
\bm g_{J}^{(k)}
\mid
\bm\theta^{(k)}
\big]
=\!
\nabla_{\bm\theta}
J(\bm\theta^{(k)}),
\!\!\quad
\mathbb E
\big[
\bm g_{\nu}^{(k)}
\mid
\bm\theta^{(k)}
\big]
=\!
\nabla_{\bm\theta}
\bm\nu(\bm\theta^{(k)}),
\!\!\quad
\mathbb E
\big[
\nu_{y,\mathcal B_{y}^{(k)}}(\bm \theta_y^{(k)})
\mid
\bm\theta_y^{(k)}
\big]
=\!
\nu_y(\bm\theta_y^{(k)})
\quad
\forall y\!\in\![Y].
\]
Moreover, the estimators have uniformly bounded variances:
\[
\begin{aligned}
&
\mathbb E
\Big[
\|
\bm g_{J}^{(k)}
-
\nabla_{\bm\theta}
J(\bm\theta^{(k)})
\|^2
\mid
\bm\theta^{(k)}
\Big]
\le
\sigma_J^2,
\\
&
\mathbb E
\Big[
\|
\bm g_{\nu}^{(k)}
-
\nabla_{\bm\theta}
\bm\nu(\bm\theta^{(k)})
\|^2
\mid
\bm\theta^{(k)}
\Big]
\le
\sigma_{\nabla\nu}^2,
\\
&
\mathbb E
\Big[
\|
\nu_{y,\mathcal B_{y}^{(k)}}(\bm \theta_y^{(k)})
-
\nu_y(\bm\theta_y^{(k)})
\|^2
\mid
\bm\theta_y^{(k)}
\Big]
\le
\sigma_\nu^2,
\qquad
\forall y\in[Y],
\end{aligned}
\]
for some constants
$\sigma_J^2,\sigma_{\nabla\nu}^2,\sigma_\nu^2<\infty$.
\end{assumption}
This assumption holds in our setting because we construct the stochastic
estimators by sampling batches
$\mathcal B_{y}^{(k)} \subset [D]$
and averaging the corresponding day-level operational outcomes for each
year $y\in[Y]$. 
Since we sample the days uniformly, the resulting
estimators remain unbiased with respect to the full objective gradient
$\nabla_{\bm\theta}J(\bm\theta)$, the expected renewable shortfall
$\nu_y(\bm\theta_y)$, and its gradient
$\nabla_{\bm\theta_y}\nu_y(\bm\theta_y)$.
In addition, Algorithm~\ref{alg:sgd_tri-level} projects the dual
multipliers onto the bounded interval $[0,\bar\lambda]$, which ensures
uniform boundedness of the effective multipliers
$\omega_y^{(k)}$.

For the following discussion,
we write the effective augmented objective at iteration $(k)$ as
\begin{align}
\Phi(\bm\theta^{(k)})
=
J(\bm\theta^{(k)})
+
\sum_{y\in[Y]}
\omega_y^{(k)}
\nu_y(\bm\theta_y^{(k)}), 
\end{align}
and then define the following projected-gradient residuals: 
\begin{align}
&\widehat G^{(k)}
=
\left(
\bm\theta^{(k)}
-
\Pi_{\mathcal H}
\left(
\bm\theta^{(k)}
-
\alpha\bm g^{(k)}
\right)
\right) \label{eq:proj-stat-residual}\\
& G^{(k)}
=
\left(
\bm\theta^{(k)}
-
\Pi_{\mathcal H}
\left(
\bm\theta^{(k)}
-
\alpha
\nabla_{\bm\theta}\Phi(\bm\theta^{(k)})
\right)
\right).
\end{align}
The first residuals $\widehat G^{(k)}$ are the stochastic projected-gradient residuals, and the second residuals $G^{(k)}$ are the deterministic projected-gradient residuals.

\begin{lemma}\label{lemma: variance bound} Suppose Assumptions~\ref{assumption1}--\ref{assumption5} hold. We let $\mathcal F^{(k)}$ denote the history of
Algorithm~\ref{alg:sgd_tri-level}, including all iterates and sampled batches up to iteration $(k-1)$ \citep{ebrahimi2025resolution}, and we define $\sigma_g^2=2\sigma_J^2+2\bar\lambda^2\sigma_{\nabla\nu}^2$.
Then, the gradient estimator has a bounded second order moment:
\[
\mathbb E
\left[
\left\|
\bm g^{(k)}
-
\nabla_{\bm\theta}\Phi(\bm\theta^{(k)})
\right\|^2
\mid
\mathcal F^{(k)}
\right]
\le
\sigma_g^2,
\]
\end{lemma}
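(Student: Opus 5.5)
We decompose the estimator error into an objective part and a constraint part, then apply $\|a+b\|^2 \le 2\|a\|^2+2\|b\|^2$.

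From Step~8 of Algorithm~\ref{alg:sgd_tri-level}, $\bm g^{(k)} = \bm g_J^{(k)} + \bm\omega^{(k)}\odot\bm g_\nu^{(k)}$, where multiplication is blockwise by year. We read $\nabla_{\bm\theta}\Phi(\bm\theta^{(k)})$ with $\bm\omega^{(k)}$ held fixed, as it is treated in $\Phi$. Then
\[
\nabla_{\bm\theta}\Phi(\bm\theta^{(k)}) = \nabla_{\bm\theta}J(\bm\theta^{(k)}) + \bigl(\omega_1^{(k)}\nabla_{\bm\theta_1}\nu_1(\bm\theta_1^{(k)}),\dots,\omega_Y^{(k)}\nabla_{\bm\theta_Y}\nu_Y(\bm\theta_Y^{(k)})\bigr).
\]
The difference therefore splits into $a^{(k)} := \bm g_J^{(k)}-\nabla_{\bm\theta}J(\bm\theta^{(k)})$ and $b^{(k)} := \bigl(\omega_y^{(k)}(\bm g_{\nu_y}^{(k)}-\nabla_{\bm\theta_y}\nu_y(\bm\theta_y^{(k)}))\bigr)_{y}$. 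This gives
\[
\|\bm g^{(k)}-\nabla_{\bm\theta}\Phi(\bm\theta^{(k)})\|^2 \le 2\|a^{(k)}\|^2 + 2\|b^{(k)}\|^2 .
\]

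For the second term, we use that the dual update (Step~10) projects onto $[0,\bar\lambda]$, so $0\le\omega_y^{(k)}\le\lambda_y^{(k)}\le\bar\lambda$. Since the norm of a stacked vector is the square root of the sum of squared block norms,
\[
\|b^{(k)}\|^2=\sum_y(\omega_y^{(k)})^2\|\bm g_{\nu_y}^{(k)}-\nabla\nu_y\|^2\le\bar\lambda^2\|\bm g_\nu^{(k)}-\nabla_{\bm\theta}\bm\nu(\bm\theta^{(k)})\|^2 .
\]
Taking conditional expectations and invoking the variance bounds of Assumption~\ref{assumption5} gives $2\sigma_J^2+2\bar\lambda^2\sigma_{\nabla\nu}^2=\sigma_g^2$.

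\textbf{Main obstacle: the conditioning.} Assumption~\ref{assumption5} conditions on $\bm\theta^{(k)}$, while the lemma conditions on the history $\mathcal F^{(k)}$, which contains batches only up to iteration $k-1$. We handle this in three steps.

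\begin{enumerate}[label=(\roman*)]
\item $\bm\theta^{(k)}$ and $\lambda^{(k)}$ are $\mathcal F^{(k)}$-measurable, because they are produced from earlier batches. The fresh batches $\mathcal B_y^{(k)}$ are sampled uniformly and independently of the past. The conditional law of the current estimators given $\mathcal F^{(k)}$ therefore coincides with their law given $\bm\theta^{(k)}$, so the variance bounds transfer.
\item $\omega_y^{(k)}=\mathbb 1_{\{V_y^{(k)}>0\}}\lambda_y^{(k)}$ depends on $V_y^{(k)}$, which uses the current batch. Hence $\omega^{(k)}$ is not $\mathcal F^{(k)}$-measurable, and we cannot pull it out of the expectation. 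The pointwise bound $\omega_y^{(k)}\le\bar\lambda$ avoids the issue: we bound $\|b^{(k)}\|^2$ almost surely before taking expectations, so the correlation between $\omega^{(k)}$ and $\bm g_\nu^{(k)}$ never enters. This is also why we use Young's inequality instead of expanding the cross term, which would require unbiasedness and could fail here.
\item We interpret $\nabla\Phi$ with the realized $\omega^{(k)}$, consistent with the definition of $\Phi$ at iteration $(k)$.
\end{enumerate}

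Assumptions~\ref{assumption1}--\ref{assumption4} are used only to guarantee that $\nabla_{\bm\theta}J$ and $\nabla_{\bm\theta}\bm\nu$ exist via the implicit function theorem, so that all quantities are well defined.
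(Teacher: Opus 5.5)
Your proof is correct and follows the paper's argument step for step. It uses the same split of $\bm g^{(k)}-\nabla_{\bm\theta}\Phi(\bm\theta^{(k)})$ into the objective error and the $\bm\omega^{(k)}$-weighted constraint error, the inequality $\|\bm a+\bm b\|^2\le 2\|\bm a\|^2+2\|\bm b\|^2$, the pointwise bound $0\le\omega_y^{(k)}\le\bar\lambda$, and then Assumption~\ref{assumption5}. Your treatment of the conditioning is a correct refinement of a step the paper passes over with ``taking the conditional expectation'': you note that $\bm\theta^{(k)}$ is $\mathcal F^{(k)}$-measurable, while $\bm\omega^{(k)}$ depends on the current batch through $V_y^{(k)}$ and so must be bounded before taking expectations.
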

\begin{proof}
    See Appendix~\ref{ax:proof_of_thrm1}.
\end{proof}

\begin{theorem}[Projected SGD convergence with deterministic stationarity residuals]
\label{theorem1}
Suppose Assumptions~\ref{assumption1}--\ref{assumption5} hold. Assume that each $\Phi$ has $L_\Phi$-Lipschitz continuous gradients on
$\mathcal H$, and that $\Phi$ is uniformly bounded from below on
$\mathcal H$ by $\Phi_*$.
\begin{enumerate}[label=(\alph*)]
    \item  \emph{[Non-asymptotic error bounds]} For any $K\ge1$, $\alpha\le \frac{1}{2L_\Phi}$, and $\sigma_g^2
=
2\sigma_J^2
+
2\bar\lambda^2\sigma_{\nabla\nu}^2$, we have
\begin{align*}
\frac{1}{K}
\sum_{k=0}^{K-1}
\mathbb E
\left[
\left\|
G^{(k)}
\right\|^2
\right]
&\le
\frac{4}{L_\Phi K}
\left(
\Phi(\bm\theta^{(0)})-\Phi_*
\right)
+
\frac{2\alpha\sigma_g^2}{L_{\Phi}}
+
{2\alpha^2\sigma_g^2},
\end{align*}
Consequently, there exists an iteration $k^*\in\{0,\dots,K-1\}$ such that
\begin{align*}
\mathbb E
\left[
\left\|
G^{(k^*)}
\right\|^2
\right]
&\le
\frac{4}{L_\Phi K}
\left(
\Phi(\bm\theta^{(0)})-\Phi_*
\right)
+
\frac{2\alpha\sigma_g^2}{L_{\Phi}}
+
{2\alpha^2\sigma_g^2}.
\end{align*}

\item \emph{[Complexity bound]} Let $\epsilon>0$ be an arbitrary scalar such that $\mathbb E
\left[
\left\|
G_{k_\epsilon^*}
\right\|^2
\right]\le \epsilon$. Suppose the stepsize in Algorithm~\ref{alg:sgd_tri-level} is given by $\frac{1}{k_\epsilon^*}$. Then, we have
\[k_\epsilon^*=\mathcal{O}\left(\frac{1}{L_\Phi \epsilon}
\left(
\Phi(\bm\theta^{(0)})-\Phi_*
\right)+\frac{\sigma_g^2}{\epsilon L_{\Phi}}+\frac{\sigma_g^2}{\sqrt{\epsilon}}\right).
\]
\end{enumerate}
\end{theorem}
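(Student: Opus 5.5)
We follow the standard nonconvex projected-SGD analysis (in the spirit of Ghadimi--Lan), treating $\Phi$ at iteration $(k)$ with the effective multipliers $\omega^{(k)}$ frozen. These multipliers are $\mathcal F^{(k)}$-measurable, since they depend only on $V^{(k)}$ and $\lambda^{(k)}$. The first step is a one-step descent inequality. Write $\bm\theta^{(k+1)}=\bm\theta^{(k)}-\widehat G^{(k)}$. By $L_\Phi$-smoothness,
\begin{equation*}
\Phi(\bm\theta^{(k+1)})\le \Phi(\bm\theta^{(k)})-\langle\nabla\Phi(\bm\theta^{(k)}),\widehat G^{(k)}\rangle+\tfrac{L_\Phi}{2}\|\widehat G^{(k)}\|^2 .
\end{equation*}
The variational characterization of the projection onto the convex compact set $\mathcal H$ (Assumption~\ref{assumption1}) gives $\langle \bm g^{(k)},\widehat G^{(k)}\rangle\ge \|\widehat G^{(k)}\|^2/\alpha$. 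We then split $\nabla\Phi=\bm g^{(k)}-\bm e^{(k)}$, where $\bm e^{(k)}=\bm g^{(k)}-\nabla\Phi(\bm\theta^{(k)})$. Applying Young's inequality to the cross term $\langle\bm e^{(k)},\widehat G^{(k)}\rangle$ and using $\alpha\le 1/(2L_\Phi)$ yields
\begin{equation*}
\Phi(\bm\theta^{(k+1)})\le\Phi(\bm\theta^{(k)})-c_1\|\widehat G^{(k)}\|^2/\alpha+c_2\,\alpha\|\bm e^{(k)}\|^2
\end{equation*}
for absolute constants $c_1,c_2$.

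The second step transfers this to the deterministic residual. Nonexpansiveness of $\Pi_{\mathcal H}$ gives $\|G^{(k)}-\widehat G^{(k)}\|\le\alpha\|\bm e^{(k)}\|$, hence
\begin{equation*}
\|G^{(k)}\|^2\le 2\|\widehat G^{(k)}\|^2+2\alpha^2\|\bm e^{(k)}\|^2 .
\end{equation*}
This is exactly the source of the $2\alpha^2\sigma_g^2$ term in the bound. Taking conditional expectations with respect to $\mathcal F^{(k)}$ and invoking Lemma~\ref{lemma: variance bound} replaces $\mathbb E[\|\bm e^{(k)}\|^2\mid\mathcal F^{(k)}]$ by $\sigma_g^2$. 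We then take total expectation, telescope over $k=0,\dots,K-1$, use $\Phi(\bm\theta^{(K)})\ge\Phi_*$, and divide by $K$. Tracking constants so that the leading coefficient matches $4/(L_\Phi K)$ works because the factor $1/\alpha\ge 2L_\Phi$ can be used to trade $\alpha$ for $1/L_\Phi$. This gives the averaged bound in (a). The existence of $k^*$ follows because a minimum is at most an average.

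For (b), we substitute a stepsize $\alpha$ of order $\min\{1/(2L_\Phi),\sqrt\epsilon/\sigma_g\}$, or the stated $1/k^*_\epsilon$, into (a). We then require each of the three terms to be at most $\epsilon/3$ and solve for $K$. The first term gives the $(\Phi(\bm\theta^{(0)})-\Phi_*)/(L_\Phi\epsilon)$ contribution, the $\alpha/L_\Phi$ term gives $\sigma_g^2/(\epsilon L_\Phi)$, and the $\alpha^2$ term gives $\sigma_g^2/\sqrt\epsilon$.

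The main obstacle I expect is that $\Phi$ changes across iterations, because $\omega^{(k)}$ is updated. The telescoping therefore has extra terms $\sum_y(\omega_y^{(k+1)}-\omega_y^{(k)})\nu_y(\bm\theta_y^{(k+1)})$. My first attempt is to read the theorem's hypothesis that ``each $\Phi$'' is bounded below by $\Phi_*$ and is $L_\Phi$-smooth as applying uniformly over the multiplier values $\omega\in[0,\bar\lambda]^Y$. I would then bound these drift terms using compactness of $\mathcal H$, which bounds $|\nu_y|$, together with the bounded dual step $|\lambda^{(k+1)}-\lambda^{(k)}|\le\alpha_\lambda|V^{(k)}|$. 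If these terms are absorbed, or assumed negligible as the statement implicitly does, the constants come out as stated. I would flag explicitly that the indicator switches in $\omega$ are handled this way.
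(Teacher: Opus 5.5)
Your chain of inequalities is the paper's proof step for step, with identical constants. It runs: the descent lemma for $\Phi$; the projection inequality $\bm g^{(k)\top}(\bm\theta^{(k+1)}-\bm\theta^{(k)})\le-\|\widehat G^{(k)}\|^2/\alpha$; Young's inequality on the error term; $\alpha\le 1/(2L_\Phi)$ to get the coefficient $L_\Phi/2$ on $\|\widehat G^{(k)}\|^2$ (your $1/(4\alpha)\ge L_\Phi/2$); telescoping against $\Phi_*$; nonexpansiveness of $\Pi_{\mathcal H}$ to pass to $G^{(k)}$; ``minimum $\le$ average''; and $\alpha=1/K$ for (b).

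Where you go beyond the paper, your proposed fix does not work. The paper telescopes $\mathbb E[\Phi(\bm\theta^{(k)})]-\mathbb E[\Phi(\bm\theta^{(k+1)})]$ as if $\Phi$ were a single function, i.e.\ it implicitly freezes $\bm\omega$. You correctly see that the multiplier updates leave drift terms $\sum_y(\omega_y^{(k+1)}-\omega_y^{(k)})\nu_y(\bm\theta_y^{(k+1)})$. These cannot be absorbed into the stated constants via compactness and the bounded dual step, for three reasons. They have no favorable sign: while the target is persistently violated, $\lambda_y$ increases and $\nu_y>0$, so every such term is positive. An indicator switch moves $\omega_y$ by up to $\bar\lambda$. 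And each step can contribute up to $\nu_{\max}\|\bm\omega^{(k+1)}-\bm\omega^{(k)}\|_1$. Carried through, they add $\frac{4\nu_{\max}}{L_\Phi K}\,\mathbb E\sum_{k}\|\bm\omega^{(k+1)}-\bm\omega^{(k)}\|_1$ to the right-hand side. This is an $\mathcal O(1)$ term that does not decay with $K$ unless you additionally assume the multiplier sequence has bounded total variation. So the stated bound is established, by you and by the paper, only for frozen multipliers. Say so explicitly, or add the total-variation assumption and the extra term, rather than claiming the drift is absorbed.

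Two smaller corrections. First, $\bm\omega^{(k)}$ is not $\mathcal F^{(k)}$-measurable, because Step~4 folds the current batch $\mathcal B_y^{(k)}$ into $V_y^{(k)}$ before Step~7 forms $\omega_y^{(k)}$. This is harmless: the bound in Lemma~\ref{lemma: variance bound} holds pointwise for every $\bm\omega\in[0,\bar\lambda]^Y$, and the argument never uses unbiasedness of $\bm g^{(k)}$. Your justification is still wrong, though. Second, in (b) only $\alpha=1/K$ works. With $\alpha$ of order $\sqrt\epsilon/\sigma_g$, the term $2\alpha\sigma_g^2/L_\Phi$ is of order $\sigma_g\sqrt\epsilon/L_\Phi$, not $\mathcal O(\epsilon)$. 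With $\alpha=1/K$ you also need $K\ge 2L_\Phi$ to respect $\alpha\le 1/(2L_\Phi)$. The $2\alpha^2\sigma_g^2$ term in fact only requires $K\gtrsim\sigma_g/\sqrt\epsilon$.
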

\begin{proof}
    See Appendix~\ref{ax:proof_of_thrm1}.
\end{proof}

Theorem~\ref{theorem1} establishes convergence properties of the proposed
projected stochastic primal-dual framework. The theorem shows that the
average projected stationarity residual satisfies an
$\mathcal O(1/K)$ convergence bound up to stochastic error terms
determined by the gradient-estimation variance and the chosen stepsize.
Consequently, the iterates generated by
Algorithm~\ref{alg:sgd_tri-level} approaches a first-order stationary
region of the constrained expansion-planning problem. Part~(b) further
provides an iteration-complexity bound for obtaining an
$\epsilon$-approximate stationary solution. Moreover, if full-batch
gradients are used, or if the batch size increases so that
$\sigma_g^2\rightarrow 0$, the stochastic error terms vanish and the
result reduces to the deterministic projected-gradient convergence rate.

\section{Numerical experiments}

We demonstrate the proposed method on an exemplary power system planning study and compare the proposed method with a standard benchmark approach.
We first give an overview on the data and simulation
setup, and then discuss the results.

\subsection{Data and implementation}
We use the IEEE 24-bus test system \citep{ordoudis2016updated} for the analysis. 
We extend the available load data from the original IEEE 24-bus test system to one year of hourly data by scaling it with yearly real-world demand profiles from ENTSO-E \citep{OpenPowerSystemDataplatform}. 
For this case study, we model demand as deterministic and introduce operational uncertainty $\bm \xi$ explicitly only for wind power injections. We note that this is not a requirement for the proposed method but a choice for this case study to ease exposition and discussion.
To this end, we use data from the NLR (previously NREL) Wind Integration National Dataset (WIND) Toolkit \citep{draxl2015wind}. The dataset provides high-resolution wind data for locations across the United States, including day-ahead forecasts and real-time measurements of wind power generation.
In addition to the resources from the data set as described in \cite{ordoudis2016updated}, we place wind farms at nodes 3, 5, 9, 16, 19, 20 and the capacity of each wind farm is 400 MW.
We set renewable curtailment and load shedding cost as $\$20/MWh$, and $\$10, 000 /MWh$, respectively.
The planner can make resource investments as per the candidate wind farms, batteries, fuel generators and transmission lines shown in Tables~\ref{wind} --~\ref{line}.

\begin{table}
\centering
\caption{Candidate wind generation data}
\begin{tabular*}{.9\linewidth}{@{\extracolsep{\fill}} llll @{}}
\midrule
Wind Gen & Bus & $w_{\max}$ (MW) &  $c_{\text{inv}}$ (\$/MW) \\
\hline
Wind Gen 1 & 3  & 1800  & $90000 $ \\
Wind Gen 2 & 5 & 1500  & $80000 $ \\
Wind Gen 3 & 9 & 2000  & $85000 $ \\
Wind Gen 4 & 16  & 1500  & $90000 $ \\
Wind Gen 5 & 19 & 2000  & $100000 $ \\
Wind Gen 6 & 20 & 1400  & $85000 $ \\
\bottomrule
\label{wind}
\end{tabular*}
\end{table}

\begin{table}
\centering
\caption{Battery data}
\begin{tabular*}{0.9\linewidth}{@{\extracolsep{\fill}} lllll @{}}
\toprule
Battery & Bus & $E$ (MWh) &   $c_{\text{inv}}$ (\$/MWh)& $c_{\text{opr}}$ (\$/MWh)\\
\hline
Bat 1 & 16 & 640 &   $120000 $ & $1.5 $ \\
Bat 2 & 20 & 380  &  $100000 $ & $2.5 $ \\
Bat 3  & 7  & 160 &  $110000 $ & $4 $ \\
Bat 4 & 24 & 200 &  $170000 $ & $5 $ \\
\bottomrule
\label{battery}
\end{tabular*}
\end{table}

\begin{table}
\centering
\caption{Fuel generation data}
\begin{tabular*}{0.9\linewidth}{@{\extracolsep{\fill}} lllll @{}}
\toprule
Fuel Gen & Bus & $p_{\max}$ (MW)& $c_{\text{inv}} $ (\$/MW)& $c_{\text{opr}}$ (\$/MWh) \\
\hline
Fuel Gen 1 & 8  & 350.0  &  $40000 $ & $42.0 $ \\
Fuel Gen 2 & 21 & 280.0  &  $42000 $ & $12.0 $ \\
Fuel Gen 3 & 9  & 310.0 &  $25000 $ & $54.0 $ \\
Fuel Gen 4 & 17 & 412.0 &  $25000 $ & $26.0 $ \\
Fuel Gen 5 & 19 & 315.0 &  $25000 $ & $17.0 $ \\
Fuel Gen 6 & 20 & 220.0 &  $34000 $ & $82.0 $ \\
Fuel Gen 7 & 16 & 475.0  &  $35000 $ & $42.0 $ \\
\bottomrule
\label{generation}
\end{tabular*}
\end{table}

\begin{table}
\centering
\caption{Candidate transmission line data}
\begin{tabular*}{0.9\linewidth}{@{\extracolsep{\fill}} llllll @{}}
\toprule
Line & From bus & To bus & $x_{\text{pu}}$ & Rate (MW) &  $c_{\text{inv}}$ (\$/MW)\\
\hline
Line 1 & 3  & 9  & 0.12 & 200.0  & $100.0 $ \\
Line 2 & 16 & 19 & 0.10 & 250.0 & $90.5 $ \\
\bottomrule
\label{line}
\end{tabular*}
\end{table}

We implemented the model and algorithm in Julia using JuMP \citep{lubin2023jump}. To compute the Jacobians
$\frac{\partial \bm{x}^*_d}{\partial \bm{\theta}}$ we use DiffOpt \citep{besancon2023diffopt} in \emph{reverse mode} \citep{rosemberg2024two}. 
The resulting optimization problems were solved with the Gurobi solver on the Rutgers Amarel cluster, using nodes with 128~GB of memory and 16 cores (Dual Intel Xeon Gold 6448Y processors). For the single-year setting, the average runtime per sample (day) was approximately 11 seconds for each iteration of the proposed SGD. Our data and implementation is available open source at 
\begin{center}
[\url{https://github.com/ropes-lab/Uncertainty-aware-PSP-via-SGD}]. 
\end{center}

\subsection{Testing and benchmark}
\label{ssec:testing_system}

To evaluate the performance of the investment decisions obtained with the proposed uncertainty-aware method, we compare them with investments obtained using a deterministic benchmark method that, as common in most planning models, uses a single-stage operations model with \textit{realized} renewable availability. 
This benchmark model, which we will refer to as \textit{uncertainty agnostic}, enforces the same constraints on investment decisions and operations (power balance, power flow, etc.) as the proposed model. However, because the investment objective is only cost minimization and operations are modeled as a single stage without uncertainty, the resulting model can be cast as a single linear program and solved directly with off-the shelf solvers. We refer to Appendix~\ref{ax:benchmark_model} for the exact model formulation. 

Once investment decisions are obtained, we perform out-of-sample tests that simulate the two-stage operations with imperfect forecasts using these decisions. The out-of-sample data is drawn from the same wind data set, but uses new days that have not been used in the models to obtain either the proposed uncertainty-aware investment decisions or the uncertainty-agnostic benchmark. 
The DA and RT simulation stages correspond to solving the DA and RT problems as described in \eqref{daprob} and \eqref{RTprob}.

\subsection{Results and discussion}
We first analyze the behavior of the algorithm and then compare its results with the discussed benchmark.

\subsubsection{Convergence}

We tuned the hyperparameters empirically by running the model with different parameter values and stepsize schedules. 
The final configuration achieved stable convergence and consistent performance across the tested scenarios.
At each iteration, we sample five days, i.e., $B = 5$, and we use a step size $\alpha = 0.0004$. 
In addition, we use an EMA smoothing factor of $\beta = 0.2$ to track the renewable shortfall trends, while we update the adaptive penalty parameter using a dual stepsize of $\alpha_{\lambda} = 0.5$. 
We initialize the adaptive penalty parameter as 
$\lambda^{(0)} = 5\,\bar c_{\mathrm{wind}}$
in Algorithm~\ref{alg:sgd_tri-level}, where 
$\bar c_{\mathrm{wind}}$ denotes the average of the fixed investment costs over all candidate wind farms. 
We solve problem \eqref{tri-level} under four distinct renewable energy target levels: $\eta_y \in \{30\%, 40\%, 50\%, 60\%\}$.

Figs.~\ref{fig:convergenceplots}a and \ref{fig:convergenceplots}b show the trajectory of the augmented objective $\mathcal{L}(\bm{\theta}, \bm{\lambda})$ as per \eqref{outer_obj} and the original cost objective $\bm{J}(\bm{\theta})$, respectively, during the solution process.
Fig. \ref{fig:convergenceplots}c shows the renewable shortfall in each iteration estimated per batch (light blue line) and with the applied EMA smoothing (orange line).
In the first few iterations, i.e., when investments $\bm{\theta}$ are close to their initial value, costs are low (Fig.~\ref{fig:convergenceplots}b), but the renewable target is violated (Fig.~\ref{fig:convergenceplots}b).
As a result, the Lagrange multiplier increases and strongly affects the augmented objective. 
When the algorithm is still far from convergence, these dual corrections dominate the pure cost and cause the observed positive and negative swings. 
As the algorithm progresses, the renewable violation becomes systematically non-positive, indicating that the target is satisfied or exceeded. 
In this case, the algorithm does not apply an additional penalty, and the dual variable of the renewable target is driven back to zero. 
At convergence, the renewable constraint is satisfied without applying additional penalties, and the remaining objective primarily reflects the objective of the cost-minimizing feasible solution. (Observe that the trajectories in Figs.~\ref{fig:convergenceplots}a and \ref{fig:convergenceplots}b converge to the same value.)

Fig.~\ref{fig:convergenceplots}d shows the projected stationarity residual $G^{(k)}$ as per \eqref{eq:proj-stat-residual} and its moving average.  
The residual decreases rapidly during the initial iterations and continues to decline more gradually thereafter, indicating that the required gradients approach a first-order stationary region. 
The moving average smooths the stochastic fluctuations induced by batch sampling and reveals a clear downward trend.
As shown in
Theorem~\ref{theorem1}, the residual converges to a small positive value.

\begin{figure}[t]
    \centering
    \includegraphics[width=0.9\linewidth]{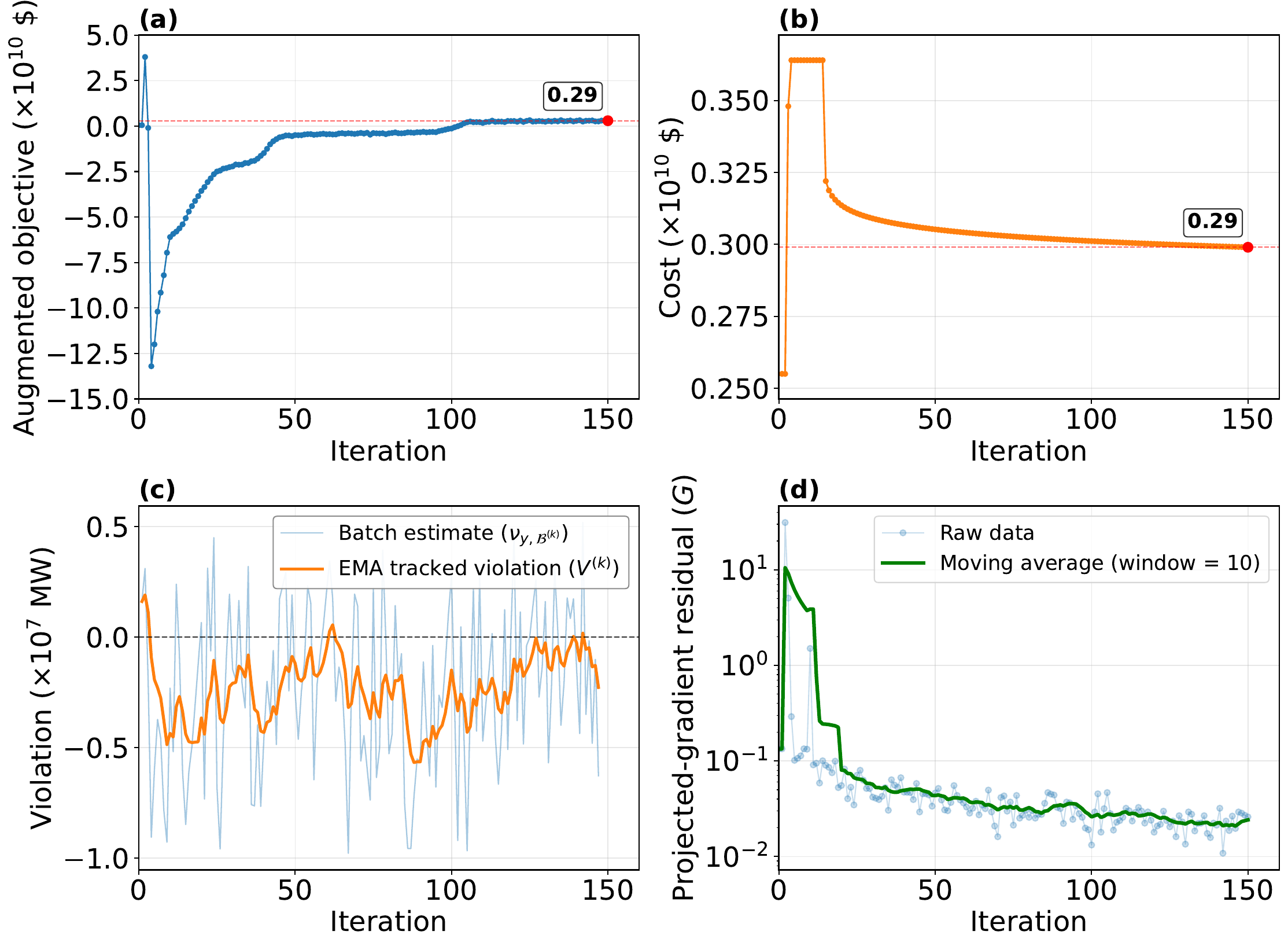}
    \caption{(a): Augmented objective values for projected SGD with a Primal-Dual method ($B=5$) under the renewable target $\eta_y = 0.5$. (b):  Objective values without penalty term for projected SGD with a Primal-Dual method ($B=5$) under the renewable target $\eta_y = 0.5$. (c): The expected renewable energy shortfall $\nu_{y,\mathcal{B}_y^{(k)}}(\bm\theta_y^{(k)})$ across iterations. (d): Trajectory of the projected-gradient residual $G^{(k)}$ as per \eqref{eq:proj-stat-residual} and its moving average (window size 10).}
    \label{fig:convergenceplots}
\end{figure}

\subsubsection{Investment decisions}

We now analyze and discuss the resulting investment decisions and highlight the role of accounting for forecast uncertainty in the planning model. 
Fig.~\ref{fig:ost30-90} shows the investment decisions obtained for the four renewable target levels. 
Each row corresponds to a different renewable target, while the columns compare investments in battery energy capacity, transmission expansion, fuel-based generation, and wind generation. 
The figure contrasts the solutions produced by the proposed uncertainty-aware approach with those obtained from the uncertainty-agnostic benchmark.
Across all renewable targets, the uncertainty-aware investments consistently prioritize more flexible resources, particularly in battery storage and transmission capacity. 
As the renewable target increases from $30\%$ to $60\%$, the uncertainty-aware solution significantly expands battery capacity,
reflecting the increased need for temporal flexibility to manage renewable variability and uncertainty. 
In contrast, the uncertainty-agnostic benchmark maintains relatively modest battery investments.
Since the benchmark assumes full information about future realizations, it anticipates reduced flexibility needs.

The uncertainty-agnostic investment model systematically under-invests in transmission capacity. 
The differences in transmission expansion are particularly significant at higher renewable targets. While both methods invest similarly in candidate line 1, the uncertainty-aware method substantially increases expansion on line 2 for renewable targets $50\%$ and $60\%$. 

Fuel-based generation investments remain relatively similar between the two approaches across all renewable targets. However, the uncertainty-agnostic benchmark generally opts for slightly higher capacities for several candidate generators, i.e., relying more on dispatchable generation resources that can be scheduled efficiently with complete foresight. 
The uncertainty-aware model, on the other hand, puts slightly less emphasis on additional fuel generation investments in favor of flexibility through battery storage and transmission.

Interestingly, the largest structural differences appear in wind generation investments. The proposed uncertainty-aware approach spreads renewable capacity more broadly across multiple wind sites and progressively increases wind investments as the renewable target rises. 
At higher targets, especially $\eta_y = 60\%$, the proposed approach allocates substantial capacity to wind farms 3, 5, and 1, creating a more geographically diversified renewable portfolio that mitigates the risk of localized low-wind conditions. 
In contrast, the uncertainty-agnostic benchmark concentrates investments more selectively at a few wind farms, particularly wind farms 3, 4, and 6.

Overall, the Fig.~\ref{fig:ost30-90} illustrates that uncertainty-aware investment planning systematically favors flexibility and diversification to hedge against uncertainty and that an uncertainty-agnostic approach leads to less diversified investment strategies.

\begin{figure}[h]
    \centering    \includegraphics[width=0.9\linewidth]{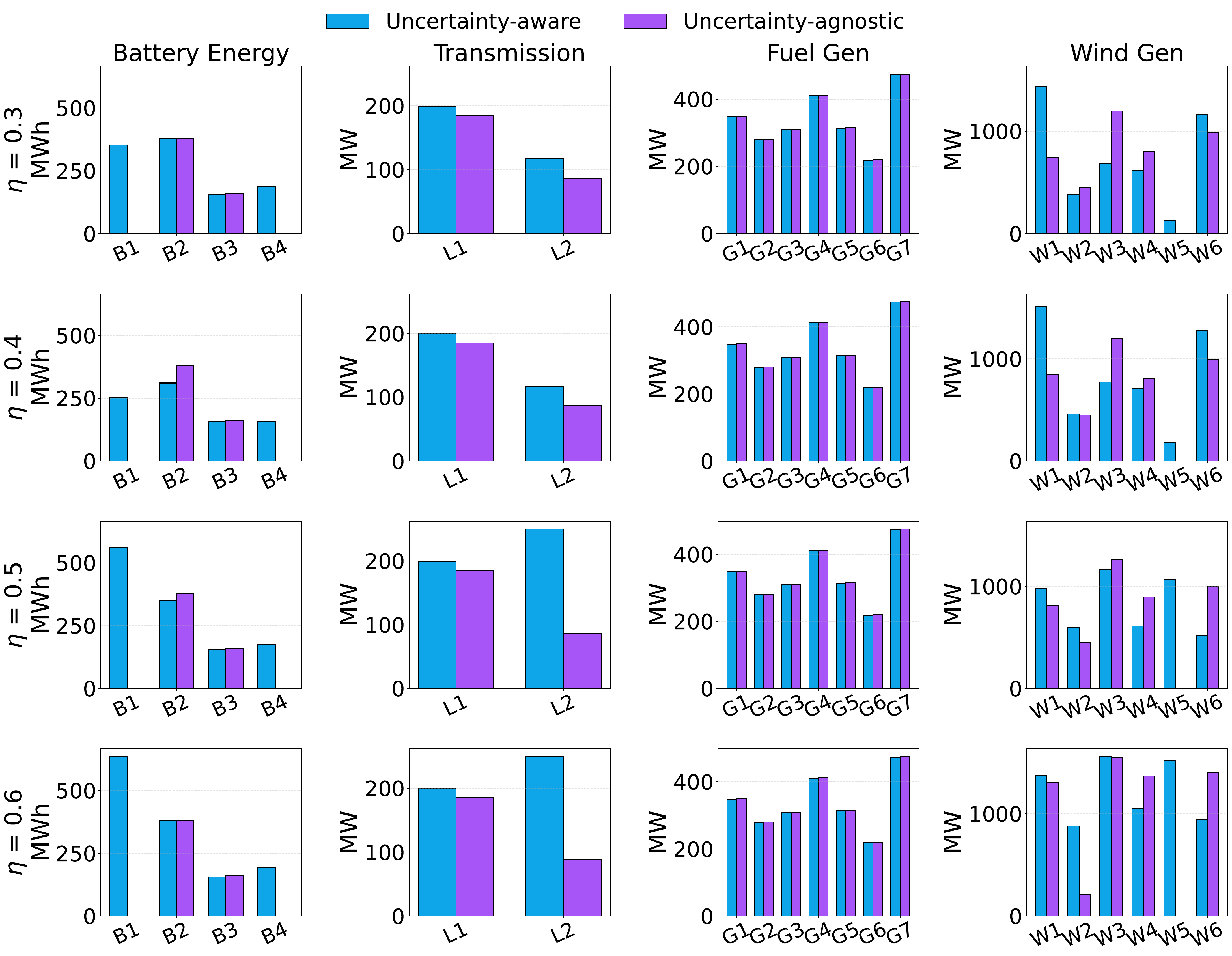}
    \caption{Comparison of optimal expansion decisions from the uncertainty-aware method (blue) and the uncertainty-agnostic benchmark (purple).}
    \label{fig:ost30-90}
\end{figure}

\subsubsection{Cost}

Table~\ref{tab:expansion} shows the out-of-sample cost comparison between the uncertainty-agnostic benchmark and the proposed uncertainty-aware approach when we evaluate the investment decisions by simulating them over one year of two-stage DA and RT operations using new data. 
The reported values include investment cost, operating cost, and total system cost.
The numbers shown in parentheses represent the percentage difference of the uncertainty-aware solution relative to the uncertainty-agnostic model for each cost category.
In addition, we also report the share of renewable production that was realized during operations. We highlight again, that the DA and RT operations models only pursue a cost-minimal dispatch and do not enforce renewable targets.

First, our results indicate that the proposed uncertainty-aware approach consistently achieves lower total costs while also satisfying the renewable targets. 
Although the resulting investment costs are higher compared to the uncertainty-agnostic investments, the gain in system flexibility significantly reduces operating costs in the presence of forecast uncertainty. 
For example, at the 50\% renewable target, the uncertainty aware model requires a 29.69\% more expensive investment but saves 9.24\% in operations, leading to an overall 4.07\%
reduction in total cost.
This indicates that higher upfront investments allow for substantial operational savings under forecast uncertainty.
The largest investment increase occurs at the 60\% target, where the uncertainty-aware investments cost 39.51\% more than the uncertainty-agnostic investments.
As a result, both approaches show a similar performance in cost, but only the uncertainty-aware approach manages to 
achieve the planned 60\% renewable target.
With the uncertainty-agnostic investments only 52\% renewable share is realized.
Despite the significant increase in investment cost, the uncertainty-aware investments still slightly reduce total cost by \(0.09\%\), mainly due to lower operation cost.

Our results indicate that neglecting forecast errors in planning decisions may lead to underinvestment in flexible resources. 
Although the uncertainty-agnostic model appears to reduce investment costs, the resulting system experiences higher operating costs and fails to fully achieve the renewable penetration targets, particularly at high target levels. 
In contrast, internalizing forecast uncertainty into the investment planning decisions leads to an improved return-on-investment during operations due to
improved renewable integration and more economical system performance.

\begin{table}[htbp]
\centering
\caption{Impact of forecast errors on expansion planning. Numbers in parentheses indicate the percentage difference of the (a) uncertainty-aware  solution relative to the (b) uncertainty-agnostic  model.}
\label{tab:expansion}
\resizebox{\textwidth}{!}{%
\begin{tabular}{llcccc}
\hline
 & & \multicolumn{4}{c}{Renewable target $\eta$} \\
\cline{3-6}
Model & Cost (m\$)
& 30\% & 40\% & 50\% & 60\% \\
\hline

(a) & Inv. cost
& 471.69 (+20.02\%) 
& 487.38 (+21.54\%) 
& 533.07 (+29.69\%) 
& 714.41 (+39.51\%) \\
& Op. cost 
& 2,630.39 (-6.72\%) 
& 2,610.54 (-6.91\%) 
& 2,436.30 (-9.24\%) 
& 2,378.56 (-7.94\%) \\
& Total (Inv. + Op.)
& 3,102.08 (-3.45\%)  
& 3,097.93 (-3.35\%)  
& 2,969.38 (-4.07\%)  
& 3,092.97 (-0.09\%) \\
& Realized renewable
& 30\% & 40\% & 50\% & 60\% \\\\

(b) & Inv. cost 
& 393.02 & 401.02 & 411.04 & 512.09 \\
& Op. cost 
& 2,820.02 & 2,804.18 & 2,684.32 & 2,583.68 \\
& Total (Inv. + Op.)
& 3,213.04 & 3,205.20  & 3,095.36 & 3,095.78 \\
& Realized renewable
& 30\% & 39\% & 45\% & 52\% \\
\hline
\end{tabular}
}
\end{table}

\section{Conclusion}
Motivated by the growing magnitude of operational uncertainty in power system planning, this paper formulated a power system expansion planning framework that explicitly accounts for coupled day-ahead (DA) and real-time (RT) operations under imperfect forecasts. We formulated the planning problem as a multi-level optimization model that allows us to (i) explicitly model the independence of day-to-day system operations from planning decisions, (ii) account for forecast uncertainty between the DA and RT stages, (iii) preserve time-coupling operational constraints, and (iv) enforce renewable production targets.

To solve the proposed model we described
a projected stochastic gradient descent (SGD) method that leverages implicit differentiation of the operational optimization problems to efficiently compute planning gradients. 
To enforce renewable production targets, we introduced a dynamic primal--dual formulation that combines projected dual updates with a smoothing mechanism to stabilize the renewable shortfall estimates obtained from stochastic sampling. 
We also established differentiability and convergence results that provide theoretical guarantees for the proposed algorithm.

Our numerical study demonstrated the application of the proposed uncertainty-aware framework and compared the resulting investment decisions with those obtained from a standard uncertainty-agnostic benchmark model that assumes perfect operational foresight. 
Our results showed that explicitly accounting for operational uncertainty improves the ability of the system to satisfy renewable utilization requirements due to a higher prioritization of flexible battery storage and transmission investments during the planning stage.
Across a set of multiple renewable targets, the proposed approach achieved lower total system costs while meeting the desired renewable utilization targets.

Further research is needed for options to more accurately model discrete (``lumpy'') investment decisions. 
A potential path forward here is the use of an inner convex relaxation of the resulting non-convex investment space along the lines of the methods discussed in \cite{blondel2022efficient}.
Also, extensive studies on larger-scale, practical planning models are needed to confirm the scalability results from \cite{degleris5169721gradient} with our proposed approach.
Based on the analysis in \cite{degleris5169721gradient}, we are confident about the scalability of the method presented here, but we note that \cite{degleris5169721gradient} uses a custom implementation of the KKT system and the gradient computation. Further research is needed to confirm that the more general and automatic implicit differentiation via packages like DiffOpt (the one used for this paper), can compete here.

\appendix

\section{Convergence analysis}
\label{app:convergence}
We establish the main results underlying the
convergence analysis of Algorithm~\ref{alg:sgd_tri-level}. In
Lemma~\ref{lem:diff_maps}, we show that the optimal DA and RT operating
decisions are continuously differentiable functions of the investment
variables and that their Jacobians are bounded. In
Lemma~\ref{lem:lipschitz_grad}, we use these properties to establish
Lipschitz continuity of the gradients of the objective function and the
renewable violation measures. We then derive a bound on the stochastic
gradient estimation error in Lemma~\ref{lemma: variance bound}. Finally,
we use these results to prove the convergence result stated in
Theorem~\ref{theorem1}.

Before establishing the convergence result, we characterize the
gradient of the penalized objective and the gradient of the renewable
violation measures with respect to the investment variables.
At iteration $(k)$, the exact gradient of $\Phi$ with respect to the
stacked investment vector $\bm\theta$ is
\[
\nabla_{\bm\theta}\Phi(\bm\theta^{(k)})
=
\nabla_{\bm\theta}J(\bm\theta^{(k)})
+
\big(
\omega_1^{(k)}
\nabla_{\bm\theta_1}\nu_1(\bm\theta_1^{(k)}),
\dots,
\omega_Y^{(k)}
\nabla_{\bm\theta_Y}\nu_Y(\bm\theta_Y^{(k)})
\big)
.\]
Similarly, we obtain the gradient of the expected renewable constraint
violation for year $y$ via the chain rule:
\begin{align}
\nabla_{\bm\theta_y}\nu_y(\bm\theta_y)
&=
\sum_{d=1}^{D}
\nabla_{\bm\theta_y}\nu_{y,d}(\bm\theta_y).
\label{eq:gradNu_app}
\end{align}
Where we obtain $\nabla_{\bm\theta_y}\nu_{y,d}(\bm\theta_y)$ as follows
\begin{align}
\nabla_{\bm\theta_y}\nu_{y,d}(\bm\theta_y)
&=
\frac{\partial \nu_{y,d}}
{\partial \bm x_{d,y}^{\rm{DA},*}}
\frac{\partial \bm x_{d,y}^{\rm{DA},*}}
{\partial \bm\theta_y}
+
\frac{\partial \nu_{y,d}}
{\partial \bm x_{d,y}^{\rm{RT},*}}
\left(
\frac{\partial \bm x_{d,y}^{\rm{RT},*}}
{\partial \bm\theta_y}
+
\frac{\partial \bm x_{d,y}^{\rm{RT},*}}
{\partial \bm x_{d,y}^{\rm{DA},*}}
\frac{\partial \bm x_{d,y}^{\rm{DA},*}}
{\partial \bm\theta_y}
\right).
\label{eq:gradNu_chain_app}
\end{align}

\begin{lemma}[Differentiability of operational solution maps]
\label{lem:diff_maps}
Under Assumptions~\ref{assumption1}--\ref{assumption4}, for every
$y\in[Y]$ and $d\in[D]$, the solution maps
\[
\bm\theta_y
\mapsto
\bm x_{d,y}^{\rm{DA},*}(\bm\theta_y),\qquad
(\bm\theta_y,\bm x_{d,y}^{\rm{DA}})
\mapsto
\bm x_{d,y}^{\rm{RT},*}
\left(
\bm\theta_y,
\bm x_{d,y}^{\rm{DA}}
\right)
\]
are continuously differentiable on $\mathcal H_y$. Moreover, their
Jacobians are bounded on $\mathcal H_y$.
\end{lemma}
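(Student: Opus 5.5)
The plan is to apply the implicit function theorem to the KKT operators $\kappa_{d,y}^{\rm DA}$ and $\kappa_{d,y}^{\rm RT}$ from \eqref{eq:da_kkt_op}--\eqref{eq:rt_kkt_op}. Local differentiability then becomes global continuous differentiability on the compact set $\mathcal H_y$, and compactness yields bounded Jacobians.

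\textbf{Step 1 (DA map).} Fix $y,d$ and $\bm\theta_y\in\mathcal H_y$. By Assumptions~\ref{assumption2} and~\ref{assumption3}, an optimal $\bm x_{d,y}^{\rm DA,*}(\bm\theta_y)$ exists and is unique. By LICQ in Assumption~\ref{assumption4}(i), the multiplier $\bm\lambda_{d,y}^{\rm DA,*}$ is also unique, so $\bm z_{d,y}^{\rm DA,*}(\bm\theta_y)$ is well defined.

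I would restrict $\kappa^{\rm DA}$ to its stationarity and complementarity rows, i.e., $\nabla C + A^\top\bm\lambda=0$ and $\mathrm{diag}(\bm\lambda)(A\bm x-b)=0$. These are polynomial, hence $C^\infty$, in $(\bm z,\bm\theta_y)$, because $A^{\rm DA}(\bm\theta_y)$ and $b^{\rm DA}(\bm\theta_y,\bar{\bm\xi})$ depend affinely on $\bm\theta_y$ (only capacity bounds enter the right-hand side, and $A$ is in fact constant).

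The Jacobian in $\bm z$ is the standard block matrix
\[
\begin{bmatrix} \nabla^2 C & A^\top \\ \mathrm{diag}(\bm\lambda^*)A & \mathrm{diag}(A\bm x^*-b)\end{bmatrix}.
\]
The standard argument (as in \cite{amos2017optnet}) shows it is nonsingular under LICQ, strict complementarity, and the second-order/uniqueness condition. For the LP case I would rely on the regularization $\gamma\|\bm x\|^2$ mentioned after Assumption~\ref{assumption3}, so that $\nabla^2C\succ0$. Concretely, take a kernel vector $(\bm u,\bm v)$. Inactive rows force $v_i=0$, since $(A\bm x^*-b)_i\neq 0$. Strictly positive multipliers on active rows force $A_{\rm act}\bm u=0$. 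Then $\bm u^\top\nabla^2C\,\bm u=0$ gives $\bm u=0$, and LICQ gives $\bm v=0$. The implicit function theorem \citep{dontchev2009implicit} then gives a $C^1$ local solution map around $\bm\theta_y$. Uniqueness of the KKT point identifies this local map with $\bm z^{\rm DA,*}$, so $\bm\theta_y\mapsto\bm x^{\rm DA,*}$ is $C^1$ near every point of $\mathcal H_y$ (relatively open neighborhoods at boundary points).

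\textbf{Step 2 (RT map).} I would repeat Step 1 with parameter $(\bm\theta_y,\bm x^{\rm DA})$. Here $\bm x^{\rm DA}$ enters $A^{\rm RT}$ and $b^{\rm RT}$ affinely (the DA schedule $p_{g,d,t,y}$ appears additively in \eqref{rt_first1}--\eqref{ramprt}), so $\kappa^{\rm RT}$ is again smooth in all arguments. Assumption~\ref{assumption4}(ii) gives nonsingularity, and the implicit function theorem gives the $C^1$ property of $(\bm\theta_y,\bm x^{\rm DA})\mapsto\bm x^{\rm RT,*}$. Composing with Step 1 via the chain rule \eqref{eq:chain_rule_rt} shows $\bm\theta_y\mapsto\bm x^{\rm RT,*}(\bm\theta_y,\bm x^{\rm DA,*}(\bm\theta_y))$ is $C^1$ as well.

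\textbf{Step 3 (boundedness) — the main obstacle.} The Jacobians are given by \eqref{eq:ift_DA}, \eqref{eq:ift_RT} and \eqref{eq:ift_RT_xDA}. They are continuous functions of $\bm\theta_y$ on the compact set $\mathcal H_y$ (Assumption~\ref{assumption1}), so they attain a finite maximum in norm.

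For the RT map, the domain must be compact in the pair $(\bm\theta_y,\bm x^{\rm DA})$. I would take the set $\{(\bm\theta_y,\bm x^{\rm DA,*}(\bm\theta_y)):\bm\theta_y\in\mathcal H_y\}$, which is compact because it is the image of a compact set under a continuous map. I would state the boundedness on this relevant set.

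The delicate point is that continuity of the inverse KKT Jacobian needs the regularity assumptions to hold \emph{uniformly}, so that $\partial_{\bm z}\kappa$ stays nonsingular everywhere on $\mathcal H_y$. Assumption~\ref{assumption4} is posited pointwise at every $\bm\theta_y\in\mathcal H_y$, which is exactly what is needed. Continuity of $\bm\theta_y\mapsto\partial_{\bm z}\kappa(\bm z^*(\bm\theta_y),\bm\theta_y)$, combined with nonsingularity everywhere, makes $\det$ bounded away from zero on the compact set, so $\|[\partial_{\bm z}\kappa]^{-1}\|$ is uniformly bounded. The same holds for $\|\partial_{\bm\theta}\kappa\|$, whose entries are continuous. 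Multiplying these bounds, and then applying the chain rule bound $\|\bm J^{\rm RT,\theta}\|+\|\bm J^{\rm RT,x}\|\|\bm J^{\rm DA,\theta}\|$, finishes the proof.
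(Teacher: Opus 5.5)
Your proposal is correct and follows the same route as the paper's proof: a nonsingular KKT Jacobian from uniqueness, LICQ and strict complementarity, then the implicit function theorem for local $C^1$ solution maps, then compactness of $\mathcal H_y$ for bounded Jacobians. You also supply details the paper only asserts (the kernel argument, identifying the implicit-function branch with $\bm z^{*}$, and restricting the RT map to the compact graph $\{(\bm\theta_y,\bm x_{d,y}^{\rm DA,*}(\bm\theta_y)):\bm\theta_y\in\mathcal H_y\}$), and the $\gamma\|\bm x\|_2^2$ regularization is unnecessary for the unregularized LPs, since a unique LP optimum is a vertex, so under LICQ the active rows form an invertible square $A_{\rm act}$ and $A_{\rm act}\bm u=\bm 0$ alone forces $\bm u=\bm 0$.
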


\begin{proof}
Assumption~\ref{assumption3} gives unique optimal DA and RT solutions.
Assumption~\ref{assumption4} gives LICQ and nondegeneracy at the DA and RT
solutions. These conditions imply that the Jacobian of each KKT system
with respect to the corresponding primal--dual variables is nonsingular at
the solution. The implicit function theorem then gives local continuous
differentiability of the DA and RT solution maps with respect to their
parameters. Since $\mathcal H_y$ is compact by
Assumption~\ref{assumption1}, continuity of the derivatives implies
bounded Jacobians on $\mathcal H_y$.
\end{proof}

\begin{lemma}[Lipschitz continuity]
\label{lem:lipschitz_grad}
Under Assumptions~\ref{assumption1}--\ref{assumption4},
$\nabla_{\bm\theta}J(\bm\theta)$ and
$\nabla_{\bm\theta_y}\nu_y(\bm\theta_y)$, $y\in[Y]$, are Lipschitz
continuous on their feasible domains.
\end{lemma}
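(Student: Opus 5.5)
The proof will show that each gradient is built from continuously differentiable pieces on a compact set, and then use the fact that a $C^1$ map on a compact convex set is Lipschitz. By \eqref{eq:tri_level_gradient_block}, $\nabla_{\bm\theta_y}J$ is a finite sum of three kinds of terms. The first is the constant vector $\nabla_{\bm\theta_y}C^{\mathrm{inv}}_y$, since \eqref{objplanning} is linear. The second consists of the constant cost vectors $\nabla_{\bm x}C^{\rm DA}_{d,y}$ and $\nabla_{\bm x}C^{\rm RT}_{d,y}$; these are constant because both operational objectives are linear (a quadratic regularizer, if added as discussed after Assumption~\ref{assumption3}, contributes $2\gamma\bm x^*$ instead, which is as smooth as $\bm x^*$). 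The third consists of the Jacobians $\bm J^{\rm DA,\theta_y}_{d,y}$, $\bm J^{\rm RT,\theta_y}_{d,y}$ and $\bm J^{\rm RT,x}_{d,y}$. The same structure holds for $\nabla_{\bm\theta_y}\nu_y$ by \eqref{eq:gradNu_app}--\eqref{eq:gradNu_chain_app}, because $\nu_{y,d}$ is linear in the operational variables and so its partial derivatives are constant vectors. Since constants and products of bounded Lipschitz maps are Lipschitz, it therefore suffices to show that the Jacobian maps $\bm\theta_y\mapsto \bm J^{\cdot}_{d,y}(\bm\theta_y)$ are Lipschitz and bounded on $\mathcal H_y$. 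Boundedness is already given by Lemma~\ref{lem:diff_maps}.

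For the Lipschitz property, use \eqref{eq:ift_DA}, \eqref{eq:ift_RT} and \eqref{eq:ift_RT_xDA}. Each Jacobian equals $-[\partial_{\bm z}\kappa]^{-1}\partial_{\bm\theta}\kappa$, evaluated at $(\bm z^*(\bm\theta_y),\bm\theta_y)$. The entries of $\kappa$ are polynomial in $(\bm z,\bm\theta_y,\bm x^{\rm DA})$: $A(\bm\theta_y)$ and $b(\bm\theta_y,\cdot)$ are affine in $\bm\theta_y$ (e.g.\ $\bar\xi\,\bar p^w$ and $\bar f$), and complementarity is bilinear. Hence $\partial_{\bm z}\kappa$ and $\partial_{\bm\theta}\kappa$ are smooth in their arguments. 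By Assumption~\ref{assumption4}, $\partial_{\bm z}\kappa$ is nonsingular at every point of the solution path. The path $\bm\theta_y\mapsto(\bm z^*(\bm\theta_y),\bm\theta_y)$ is continuous, by Lemma~\ref{lem:diff_maps} together with continuity of the multipliers from the implicit function theorem, and $\mathcal H_y$ is compact by Assumption~\ref{assumption1}. Therefore the smallest singular value of $\partial_{\bm z}\kappa$ is bounded below by some $\mu>0$ uniformly on $\mathcal H_y$, and $\|[\partial_{\bm z}\kappa]^{-1}\|\le 1/\mu$. Matrix inversion is Lipschitz on $\{M:\sigma_{\min}(M)\ge\mu\}$, since $\|M^{-1}-N^{-1}\|\le\mu^{-2}\|M-N\|$. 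Compositions and products of bounded Lipschitz maps are Lipschitz. It remains to show that $\bm z^*$ itself is Lipschitz: $\bm x^*$ is Lipschitz because it is $C^1$ with bounded derivative on the convex set $\mathcal H_y$ (mean value theorem), and the same argument applies to $\bm\lambda^*$, which is also delivered by the implicit function theorem. For the RT stage, compose with the Lipschitz map $\bm\theta_y\mapsto\bm x^{\rm DA,*}(\bm\theta_y)$ and then apply the chain rule \eqref{eq:chain_rule_rt}. A product of bounded Lipschitz factors is again Lipschitz.

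The main obstacle is the uniform nonsingularity step together with \emph{global} (rather than local) differentiability. The implicit function theorem only yields local $C^1$ pieces. To pass from these to a global Lipschitz constant, I would rely on Assumption~\ref{assumption4} holding at \emph{every} $\bm\theta_y\in\mathcal H_y$. Nondegeneracy then implies that the active set is locally constant, so by connectedness of the convex set $\mathcal H_y$ it cannot switch, and the KKT system is a single smooth equation throughout. A finite subcover of $\mathcal H_y$ by implicit-function-theorem neighbourhoods then gives the uniform bounds. Finally, the gradients of $J$ and $\nu_y$ sum over finitely many days $d$ and years $y$, so the Lipschitz constants add, which proves the lemma.
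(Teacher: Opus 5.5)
Your argument is correct, and it takes a different and more careful route than the paper. The paper's proof is a short chain: $\mathcal H_y$ is compact, the operational feasible sets are bounded, Lemma~\ref{lem:diff_maps} gives bounded solution-map Jacobians, and the linear cost and violation functions have bounded derivatives; from this it concludes directly that $\nabla_{\bm\theta}J$ and $\nabla_{\bm\theta_y}\nu_y$ are Lipschitz. Bounded first derivatives, however, only make $J$ and $\nu_y$ themselves Lipschitz. Lipschitz gradients require the Jacobians $\bm J^{\cdot}_{d,y}$ to be Lipschitz in $\bm\theta_y$, and your proof supplies exactly this. It does so through the explicit IFT formula, the compactness lower bound $\sigma_{\min}(\partial_{\bm z}\kappa)\ge\mu>0$, and the estimate $\|M^{-1}-N^{-1}\|\le\mu^{-2}\|M-N\|$. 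You also handle the local-to-global step that Lemma~\ref{lem:diff_maps} passes over. When you write it up, state explicitly that the IFT branch satisfies the full KKT system nearby, by strict complementarity and continuity. It therefore coincides with the unique optimum, by convexity and Assumption~\ref{assumption3}, and this is what makes the active set locally, and hence globally, constant. Your route also shows how much work Assumption~\ref{assumption4} does when it holds on all of $\mathcal H_y$. Uniqueness plus LICQ force the active matrix $A_S$ of each LP to be square and invertible. In the detailed model, $\bm\theta_y$ and $\bm x^{\rm DA,*}$ enter only right-hand sides, so with a single active set $\bm x^*(\bm\theta_y)=A_S^{-1}\bm b_S(\bm\theta_y)$ is affine. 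All gradients are then constant on $\mathcal H_y$, or affine with the quadratic regularizer, and your inversion machinery is only really needed for the compact form in which $A(\bm\theta_y)$ varies. If instead the optimal basis were allowed to change, $\nabla_{\bm\theta}J$ would be piecewise constant with jumps and not Lipschitz at all. Finally, the paper's proof also notes that $\Phi$ inherits a $k$-independent constant $L_\Phi$ because $0\le\omega_y^{(k)}\le\bar\lambda$; this follows immediately from your result.
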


\begin{proof}
Assumption~\ref{assumption1} makes each $\mathcal H_y$ compact. The DA and
RT feasible regions remain bounded over the feasible investment region
under Assumption~\ref{assumption2} and the operational constraints.
Lemma~\ref{lem:diff_maps} gives bounded Jacobians for the DA and RT
solution maps.
Since the investment cost, operating cost, and renewable
violation functions have continuous derivatives on the relevant compact
domains, their derivatives are bounded. Therefore,
$\nabla_{\bm\theta}J$ and $\nabla_{\bm\theta_y}\nu_y$ are Lipschitz
continuous. Because the effective multipliers satisfy
$0 \le \omega_y^{(k)} \le \bar\lambda$,
the penalized objective $\Phi$ also has a uniformly Lipschitz
continuous gradient with constant $L_\Phi$ independent of $(k)$.
\end{proof}

\subsection{Proofs for Lemma 1 and Theorem 1}
\label{ax:proof_of_thrm1}

Our proofs follow the projected gradient argument of \cite{beck2017first} and the projected gradient convergence analysis used in \cite{degleris5169721gradient}. 
The main difference is that our update uses stochastic gradients, and it requires an additional variance bound and expectation arguments to control the gradient-estimation error. In the following, we prove Lemma~\ref{lemma: variance bound} and Theorem~\ref{theorem1}, respectively.

\begin{proof}[Proof of Lemma~\ref{lemma: variance bound}]At iteration $(k)$, the stochastic primal direction is $\bm g^{(k)}=\bm g_{J}^{(k)}+\bm\omega^{(k)}\odot \bm g_{\nu}^{(k)}$. The corresponding exact gradient of $\Phi$ is
\[
\nabla_{\bm\theta}\Phi(\bm\theta^{(k)})
=
\nabla_{\bm\theta}J(\bm\theta^{(k)})
+
\bm\omega^{(k)}
\odot
\nabla_{\bm\theta}\bm\nu(\bm\theta^{(k)}).
\]
Thus, we obtain
\[
\begin{aligned}
\bm g^{(k)}
-
\nabla_{\bm\theta}\Phi(\bm\theta^{(k)})
=
\bm g_{J}^{(k)}
-
\nabla_{\bm\theta}J(\bm\theta^{(k)})
+
\bm\omega^{(k)}
\odot
\left(
\bm g_{\nu}^{(k)}
-
\nabla_{\bm\theta}\bm\nu(\bm\theta^{(k)})
\right).
\end{aligned}
\]
Using
$\|\bm a+\bm b\|^2\le 2\|\bm a\|^2+2\|\bm b\|^2$ and the preceding relation, we obtain
\[
\begin{aligned}
&
\left\|
\bm g^{(k)}
-
\nabla_{\bm\theta}\Phi(\bm\theta^{(k)})
\right\|^2
\le
2
\left\|
\bm g_{J}^{(k)}
-
\nabla_{\bm\theta}J(\bm\theta^{(k)})
\right\|^2
+
2
\left\|
\bm\omega^{(k)}
\odot
\left(
\bm g_{\nu}^{(k)}
-
\nabla_{\bm\theta}\bm\nu(\bm\theta^{(k)})
\right)
\right\|^2 .
\end{aligned}
\]
Because $0\le \omega_y^{(k)}\le \bar\lambda$ for all $y\in[Y]$, we have
\[
\left\|
\bm\omega^{(k)}
\odot
\left(
\bm g_{\nu}^{(k)}
-
\nabla_{\bm\theta}\bm\nu(\bm\theta^{(k)})
\right)
\right\|^2
\le
\bar\lambda^2
\left\|
\bm g_{\nu}^{(k)}
-
\nabla_{\bm\theta}\bm\nu(\bm\theta^{(k)})
\right\|^2 .
\]
Taking the conditional expectation and using Assumption~\ref{assumption5}, we
obtain
\[
\mathbb E
\left[
\left\|
\bm g^{(k)}
-
\nabla_{\bm\theta}\Phi(\bm\theta^{(k)})
\right\|^2
\mid
\mathcal F^{(k)}
\right]
\le
\sigma_g^2,
\]
where $\sigma_g^2
=
2\sigma_J^2
+
2\bar\lambda^2\sigma_{\nabla\nu}^2$.
\end{proof}

\begin{proof}[Proof of Theorem~\ref{theorem1}]

\begin{enumerate}[label=(\alph*)]
    \item 
Since $\nabla \Phi$ is $L_\Phi$-Lipschitz continuous, the descent lemma yields
\begin{align}
\Phi(\bm\theta^{(k+1)})
&\le
\Phi(\bm\theta^{(k)})
+
\nabla_{\bm\theta}\Phi(\bm\theta^{(k)})^\top
\left(
\bm\theta^{(k+1)}
-
\bm\theta^{(k)}
\right)
+
\frac{L_\Phi}{2}
\left\|
\bm\theta^{(k+1)}
-
\bm\theta^{(k)}
\right\|^2 .\label{equation: descent lemma}
\end{align}
For the second term on the right-hand side, adding and subtracting $\bm g^{(k)}$, we obtain
\[
\begin{aligned}
\nabla_{\bm\theta}\Phi(\bm\theta^{(k)})^\top
\left(
\bm\theta^{(k+1)}
-
\bm\theta^{(k)}
\right)
&=
\bm g^{(k)\top}
\left(
\bm\theta^{(k+1)}
-
\bm\theta^{(k)}
\right)
-
\left(
\bm g^{(k)}
-
\nabla_{\bm\theta}\Phi(\bm\theta^{(k)})
\right)^\top
\left(
\bm\theta^{(k+1)}
-
\bm\theta^{(k)}
\right).
\end{aligned}
\]
Substituting the preceding relation into~\eqref{equation: descent lemma}, we obtain
\begin{align}
\label{eq:second_descent}
\Phi(\bm\theta^{(k+1)})
&\le
\Phi(\bm\theta^{(k)})
\!+\!
\bm g^{(k)\top}
\left(
\bm\theta^{(k+1)}
\!-\!
\bm\theta^{(k)}
\right) -
\left(
\bm g^{(k)}
\!-\!
\nabla_{\bm\theta}\Phi(\bm\theta^{(k)})
\right)^\top
\left(
\bm\theta^{(k+1)}
\!-\!
\bm\theta^{(k)}
\right)
\!+\!\nonumber\\&
\frac{L_\Phi}{2}
\left\|
\bm\theta^{(k+1)}
\!-\!
\bm\theta^{(k)}
\right\|^2.
\end{align}
The projection optimality condition for $\bm\theta^{(k+1)}
=
\Pi_{\mathcal H}
\left(
\bm\theta^{(k)}
-
\alpha \bm g^{(k)}
\right)$
gives
\[
\left(
\bm\theta^{(k)}
-
\alpha\bm g^{(k)}
-
\bm\theta^{(k+1)}
\right)^\top
\left(
\bm\theta^{(k)}
-
\bm\theta^{(k+1)}
\right)
\le 0.
\]
By rearranging the terms, we obtain
\[
\bm g^{(k)\top}
\left(
\bm\theta^{(k+1)}
-
\bm\theta^{(k)}
\right)
\le
-
\frac{1}{\alpha}
\left\|
\bm\theta^{(k+1)}
-
\bm\theta^{(k)}
\right\|^2 .
\]
Substituting the preceding inequality into~\eqref{eq:second_descent}, we obtain
\[
\begin{aligned}
\Phi(\bm\theta^{(k+1)})
&\le
\Phi(\bm\theta^{(k)})
-
\left(
\frac{1}{\alpha}
-
\frac{L_\Phi}{2}
\right)
\left\|
\bm\theta^{(k+1)}
-
\bm\theta^{(k)}
\right\|^2
-
\left(
\bm g^{(k)}
-
\nabla_{\bm\theta}\Phi(\bm\theta^{(k)})
\right)^\top
\left(
\bm\theta^{(k+1)}
-
\bm\theta^{(k)}
\right).
\end{aligned}
\]
Using Young's inequality $-\bm a^\top \bm b
\le
\frac{1}{2\alpha}\|\bm b\|^2
+
\frac{\alpha}{2}\|\bm a\|^2$, with $\bm a=
\bm g^{(k)}
-
\nabla_{\bm\theta}\Phi(\bm\theta^{(k)})$ and $\bm b=
\bm\theta^{(k+1)}
-
\bm\theta^{(k)}$, we obtain
\[
\begin{aligned}
\Phi(\bm\theta^{(k+1)})
&\le
\Phi(\bm\theta^{(k)})
-
\left(
\frac{1}{2\alpha}
-
\frac{L_\Phi}{2}
\right)
\left\|
\bm\theta^{(k+1)}
-
\bm\theta^{(k)}
\right\|^2
+
\frac{\alpha}{2}
\left\|
\bm g^{(k)}
-
\nabla_{\bm\theta}\Phi(\bm\theta^{(k)})
\right\|^2 .
\end{aligned}
\]
Since $\alpha\le 1/(2L_\Phi)$, we have $\frac{1}{2\alpha}
-
\frac{L_\Phi}{2}
\ge
\frac{L_\Phi}{2}$. Therefore, we obtain
\[
\Phi(\bm\theta^{(k+1)})
\le
\Phi(\bm\theta^{(k)})
-
\frac{L_\Phi}{2}
\left\|
\bm\theta^{(k+1)}
-
\bm\theta^{(k)}
\right\|^2
+
\frac{\alpha}{2}
\left\|
\bm g^{(k)}
-
\nabla_{\bm\theta}\Phi(\bm\theta^{(k)})
\right\|^2 .
\]
Taking conditional expectation and invoking Lemma~\ref{lemma: variance bound}, we obtain
\[
\mathbb E
[
\Phi(\bm\theta^{(k+1)})
\mid
\mathcal F^{(k)}
]
\le
\Phi(\bm\theta^{(k)})
-
\frac{L_\Phi}{2}
\mathbb E
\left[
\left\|
\bm\theta^{(k+1)}
-
\bm\theta^{(k)}
\right\|^2
\mid
\mathcal F^{(k)}
\right]
+
\frac{\alpha}{2}\sigma_g^2.
\]
Taking total expectation gives
\[
\frac{L_\Phi}{2}
\mathbb E
\left[
\left\|
\bm\theta^{(k+1)}
-
\bm\theta^{(k)}
\right\|^2
\right]
\le
\mathbb E[\Phi(\bm\theta^{(k)})]
-
\mathbb E[\Phi(\bm\theta^{(k+1)})]
+
\frac{\alpha}{2}\sigma_g^2.
\]
Invoking the definition of $\widehat G^{(k)}$, we have $\left\|
\bm\theta^{(k+1)}
-
\bm\theta^{(k)}
\right\|^2
=
\left\|
\widehat G^{(k)}
\right\|^2$. Thus, we obtain
\[
\frac{L_\Phi}{2}
\mathbb E
\left[
\left\|
\widehat G^{(k)}
\right\|^2
\right]
\le
\mathbb E[\Phi(\bm\theta^{(k)})]
-
\mathbb E[\Phi(\bm\theta^{(k+1)})]
+
\frac{\alpha}{2}\sigma_g^2.
\]
Summing the previous inequality from $k=0$ to $K-1$, and using this
variation bound to telescope the objective sequence, we obtain
\[
\begin{aligned}
\frac{L_\Phi}{2}
\sum_{k=0}^{K-1}
\mathbb E
\left[
\left\|
\widehat G^{(k)}
\right\|^2
\right]
&\le
\Phi(\bm\theta^{(0)})
-
\Phi(\bm\theta^{(K)})+
\frac{K\alpha}{2}\sigma_g^2 .
\end{aligned}
\]
Because $J$ is continuous on a compact set, it is bounded below by some finite value $J^*$. Similarly, since each $\nu_y$ is continuous on the compact set
$\mathcal H_y$, there exists a finite constant $\nu_{\max}$ such that
\[
|\nu_y(\bm\theta_y)|\le \nu_{\max},
\qquad
\forall \bm\theta_y\in\mathcal H_y,\quad \forall y\in[Y].
\]
Moreover, the effective multipliers are bounded as
$0\le \omega_y^{(k)}\le \bar\lambda$. Hence,
\[
\sum_{y\in[Y]}
\omega_y^{(k)}
\nu_y(\bm\theta_y^{(k)})
\ge
-
Y\bar\lambda\nu_{\max}.
\]
Therefore, the penalized objective $\Phi(\bm\theta^{(k)})$ is uniformly bounded below by $\Phi_*
=
J^*
-
Y\bar\lambda\nu_{\max}
$. Thus, we obtain
\[
\begin{aligned}
\frac{L_\Phi}{2}
\sum_{k=0}^{K-1}
\mathbb E
\left[
\left\|
\widehat G^{(k)}
\right\|^2
\right]
&\le
\left(\Phi(\bm\theta^{(0)})
-
\Phi_*\right)+
\frac{K\alpha}{2}\sigma_g^2 .
\end{aligned}
\]
Multiplying both sides by $2/(L_\Phi K)$, we obtain
\begin{align}
\frac{1}{K}
\sum_{k=0}^{K-1}
\mathbb E
\left[
\left\|
\widehat G^{(k)}
\right\|^2
\right]
\le
\frac{2}{L_\Phi K}
\left(
\Phi(\bm\theta^{(0)})-\Phi_*
\right)
+
\frac{\alpha\sigma_g^2}{L_{\Phi}}.\label{equation: bound for the stochastic projected-gradient mapping}
\end{align}
It remains to relate $\widehat G^{(k)}$ to the deterministic
projected-gradient residual $G^{(k)}$. By the nonexpansiveness of the
projection operator, we obtain
\[
\begin{aligned}
\left\|
G^{(k)}
-
\widehat G^{(k)}
\right\|
&=
\Bigg\|
\Pi_{\mathcal H}
\left(
\bm\theta^{(k)}
-
\alpha\bm g^{(k)}
\right)
-
\Pi_{\mathcal H}
\left(
\bm\theta^{(k)}
-
\alpha
\nabla_{\bm\theta}\Phi(\bm\theta^{(k)})
\right)
\Bigg\|
\le
\alpha\left\|
\bm g^{(k)}
-
\nabla_{\bm\theta}\Phi(\bm\theta^{(k)})
\right\|.
\end{aligned}
\]
Then, by the triangle inequality, the inequality $(a+b)^2\le 2a^2+2b^2$, and the preceding inequality, we obtain
\begin{align*}\left\|
G^{(k)}
\right\|^2&=\left\|
G^{(k)}+\widehat G^{(k)}
-
\widehat G^{(k)}
\right\|^2\le 2\left\|
\widehat G^{(k)}\right\|^2+2\left\|G^{(k)}
-
\widehat G^{(k)}
\right\|^2
\le\\&
2
\left\|
\widehat G^{(k)}
\right\|^2
+
2\alpha^2
\left\|
\bm g^{(k)}
-
\nabla_{\bm\theta}\Phi(\bm\theta^{(k)})
\right\|^2.
\end{align*}
Taking expectations, invoking Lemma~\ref{lemma: variance bound}, and averaging over $k=0,\dots,K-1$, we obtain
\[
\frac{1}{K}
\sum_{k=0}^{K-1}
\mathbb E
\left[
\left\|
G^{(k)}
\right\|^2
\right]
\le
2
\left[
\frac{1}{K}
\sum_{k=0}^{K-1}
\mathbb E
\left[
\left\|
\widehat G^{(k)}
\right\|^2
\right]
\right]
+
2\alpha^2\sigma_g^2.
\]
Substituting the bound for the stochastic projected-gradient residual from~\eqref{equation: bound for the stochastic projected-gradient mapping}, we obtain
\begin{align*}
\frac{1}{K}
\sum_{k=0}^{K-1}
\mathbb E
\left[
\left\|
G^{(k)}
\right\|^2
\right]
&\le
\frac{4}{L_\Phi K}
\left(
\Phi_0(\bm\theta^{(0)})-\Phi_*
\right)
+
\frac{2\alpha\sigma_g^2}{L_{\Phi}}
+
2\alpha^2\sigma_g^2.
\end{align*}
Finally, since the minimum of a finite collection is bounded above by its
average, there exists an iteration $k^*\in\{0,\dots,K-1\}$ such that
\begin{align*}
\mathbb E
\left[
\left\|
G^{(k^*)}
\right\|^2
\right]
&\le
\frac{4}{L_\Phi K}
\left(
\Phi_0(\bm\theta^{(0)})-\Phi_*
\right)
+
\frac{2\alpha\sigma_g^2}{L_{\Phi}}
+
2\alpha^2\sigma_g^2.
\end{align*}
\item By setting $\alpha=\frac{1}{K}$ in the results in part (a) we obtain the desired result.

\end{enumerate}
\end{proof}

\section{Benchmark planning model}
\label{ax:benchmark_model}

Below we report the detailed uncertainty-agnostic benchmark planning model. 
It enforces the same investment constraints and models operations as a single-stage dispatch using realized renewable availability.

\begin{subequations}
\label{bil2}
\begin{alignat}{2}
\text{min}
& \sum_{y \in [Y]} \Big(\frac{1}{1+r}\Big)^{\tau(y)} \Bigg(
    \sum_{l \in [L]} c^{\rm L}_{l,y}\,\bar f_{l,y}
  + \!\sum_{g \in [G]} c^{\rm G}_{g,y}\,\bar p_{g,y}
  + \sum_{b \in [B]} c^{\rm E}_{b,y}\,\bar E_{b,y} + \!\!\sum_{w \in [W]} \!c_{w,y}^{\rm W}\,\bar p^{w}_{w,y}
  \nonumber \hspace{-10cm}\\
&\hspace{3.5cm}
  + \frac{365}{D}\!\sum_{d \in [D]} \sum_{t \in [T]} \Big(
      \sum_{g \in [G]} c^{\rm G}_{g}\, p_{g,d,t,y}
    + \! \sum_{b \in [B]} c^{\rm B}_b\!\left(p^{dis}_{b,d,t,y} + p^{ch}_{b,d,t,y}\right)\nonumber\hspace{-10cm}\\
&\hspace{3.5cm}
    + \sum_{i \in [N]} c^{\rm shed}\, d^{shed}_{i,d,t,y}
    + \sum_{w \in [W]} c^{\rm cur}\, w^{cur}_{w,d,t,y}
    \Big)
  \Bigg)\hspace{-10cm}
\label{objpk}
\\
\text{s.t.}\
&\eqref{Line capacity limits}-\eqref{Energy-to-power ratio.}
\quad \text{[Investment planning constraints]}
\label{eq:benchmark_planning_constraints}
\\
&\sum_{g \in [G]_i} p_{g,d,t,y}
+ \sum_{b \in [B]_i} p^{B}_{b,d,t,y}
+ \sum_{l \in \delta_i^{in}} f_{l,d,t,y}
- \sum_{l \in \delta_i^{out}} f_{l,d,t,y} 
+ \sum_{w \in [W]_i} p^{w}_{w,d,t,y}
+ d^{shed}_{i,d,t,y}
= d_{i,d,t,y} \hspace{-5cm}\label{rt_first_pk}\\[-1.3em]
& &&  \hspace{3cm}\forall i \in [N],\ \forall t \in [T]
\nonumber
\\
&0 \le p_{g,d,t,y} \le \bar p_{g,y}
&&  \forall g \in [G],\ \forall t \in [T]
\label{Generation output limits.rt_pk}
\\
& p_{g,d,t+1,y} - p_{g,d,t,y} \le R^{\rm up}_g
&&  \forall g \in [G],\ \forall t=1,\dots,T-1
\label{ramp1pk}
\\
& p_{g,d,t,y} - p_{g,d,t+1,y} \le R^{\rm dn}_g
&&  \forall g \in [G],\ \forall t=1,\dots,T-1
\label{ramp2_pk}
\\
& f_{l,d,t,y} = \sum_{i \in [N]} H_{l,i}\theta_{i,d,t,y}
&&  \forall l \in [L],\ \forall t \in [T]
\label{angle-pk}
\\
& \theta_{ref,d,t,y} = 0
&&  \forall t \in [T]
\label{angleref-pk}
\\
& -\bar f_{l,y} \le f_{l,d,t,y} \le \bar f_{l,y}
&&  \forall l \in [L],\ \forall t \in [T]
\label{linelimit_pk1}
\\
& 0 \le e_{b,d,t,y} \le \bar E_{b,y}
&&  \forall b \in [B],\ \forall t \in [T]
\label{batt_pk2}
\\
& -\bar p^B_{b,y} \le p^{B}_{b,d,t,y} \le \bar p^B_{b,y},
&&  \forall b \in [B],\ \forall t \in [T]
\label{pkstoragepower}
\\
& e_{b,d,t+1,y} = e_{b,d,t,y} - p^{B}_{b,d,t,y},
&&  \forall b \in [B],\ \forall t=1,\dots,T-1
\label{pkstorageenergy}\\
& 0 \le p^{dis}_{b,d,t,y}
&&  \forall b \in [B],\ \forall t \in [T]
\label{Storage power obj-variable_pk}
\\
& 0 \le p^{ch}_{b,d,t,y}
&&  \forall b \in [B],\ \forall t \in [T]
\label{Storage power obj-variable2_pk}
\\
& p^{B}_{b,d,t,y} = p^{dis}_{b,d,t,y} - p^{ch}_{b,d,t,y}
&&  \forall b \in [B],\ \forall t \in [T]
\label{Storage power obj_pk}
\\
& 0 \le p^{w}_{w,d,t,y} \le \tilde{\xi}_{w,d,t,y}\,\bar p^w_{w,y}
&&  \forall w \in [W],\ \forall t \in [T]
\label{windda_pk}
\\
& 0 \le d^{shed}_{i,d,t,y}
&&  \forall i \in [N],\ \forall t \in [T]
\label{Corrective action limits.pk}
\\
& w^{cur}_{w,d,t,y}
= \tilde{\xi}_{w,d,t,y}\,\bar p^w_{w,y} - p^{w}_{w,d,t,y}
&&  \forall w \in [W],\ \forall t \in [T]
\label{Corrective action limits.curpk}
\\
& \eta_y ( \sum_{g \in [G]} p_{g,d,t,y} + \sum_{w \in [W]} p^w_{w,d,t,y})
\le \sum_{w \in [W]} p^w_{w,d,t,y}
&&  \forall t \in [T]
\label{ren-target_pk}
\end{alignat}
\end{subequations}
Objective function \eqref{objpk} minimizes the total system cost, which consists of the investment cost, the operating cost over all scenario days and time periods.
Constraints \eqref{eq:benchmark_planning_constraints} are the investment planning constraints. 
At the operational level, the same constraints on power balance \eqref{rt_first_pk}, generator constraints \eqref{Generation output limits.rt_pk}--\eqref{ramp2_pk}, power flow \eqref{angle-pk}--\eqref{linelimit_pk1}, battery storage \eqref{batt_pk2}–\eqref{Storage power obj_pk}, wind power generation \eqref{windda_pk}, load shedding \eqref{Corrective action limits.pk}, and wind curtailment \eqref{Corrective action limits.curpk} as discussed in Section~\ref{ss:planningandoperation} above are enforced.
Finally, the constraint \eqref{ren-target_pk} enforces the renewable penetration target.

\FloatBarrier

\bibliographystyle{cas-model2-names}

\bibliography{cas-refs}

\end{document}